\newcommand{\zed}{\mathbb{Z}}
\newcommand{\C}{\mathbb{C}}
\newcommand{\fil}{\mathcal{F}}
\newcommand{\ve}{\varepsilon}
\newcommand{\id}{\mathrm{id}}
\newcommand{\hmf}{\mathrm{hmf}}
\newcommand{\ch}{\mathsf{Ch}^{\mathsf{b}}}
\theoremstyle{plain}
\newtheorem{theorem}{Theorem}[section]
\newtheorem{lemma}[theorem]{Lemma}
\newtheorem{proposition}[theorem]{Proposition}
\newtheorem{corollary}[theorem]{Corollary}
\newtheorem{question}[theorem]{Question}
\theoremstyle{definition}
\newtheorem{acknowledgments}{Acknowledgments\ignorespaces}
\theoremstyle{remark}
\numberwithin{equation}{section}
\begin{document}

\title{Simplified Khovanov-Rozansky Chain Complexes of Open $2$-braids}

\author{Hao Wu}

\thanks{The author is partially supported by a Collaboration Grant for Mathematicians from the Simons Foundation.}

\address{Department of Mathematics, The George Washington University, Monroe Hall, Room 240, 2115 G Street, NW, Washington DC 20052, USA. Telephone: 1-202-994-0653, Fax: 1-202-994-6760}

\email{haowu@gwu.edu}

\subjclass[2000]{Primary 57M25}

\keywords{Khovanov-Rozansky homology, Rasmussen invariant, cable knot} 

\begin{abstract}
Motivated by the works of Krasner \cite{Krasner-2-twists} and Lobb \cite{Lobb-2-twists}, we simplify the Khovanov-Rozansky chain complexes of open $2$-braids. As an application, we show that, for a knot containing a ``long" $2$-braid, the $\mathfrak{sl}(N)$ Rasmussen invariant of this knot depends linearly on the length of this $2$-braid. We refine this result for $(2,2k+1)$ cable knots and, as a simple corollary, compute the $\mathfrak{sl}(2)$ Rasmussen invariants of $(2,2k+1)$ cables of slice and amphicheiral knots.
\end{abstract}

\maketitle

\section{Introduction}\label{sec-intro}

For a non-negative integer $k$, denote by $T_k$ (resp. $T_{-k}$) the tangle formed by two parallel strands oriented in opposite directions with $k$ negative (resp. positive) full twists between them. (See Figure \ref{twisting-fig}, where $T_k$ contains $2k$ positive crossings and $T_{-k}$ contains $2k$ negative crossings.) Krasner \cite{Krasner-2-twists} constructed simplified Khovanov-Rozansky chain complexes of $T_{\pm k}$, which were used by Lobb \cite{Lobb-2-twists,Lobb-kanenobu-knots} to deduce new properties of the Khovanov-Rozansky homology and the $\mathfrak{sl}(N)$ Rasmussen invariant.

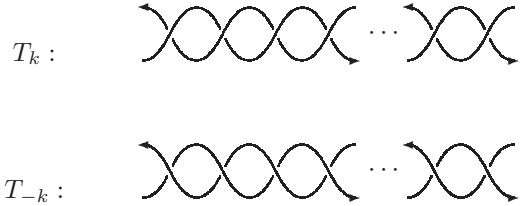
\begin{figure}[ht]
\[
\xymatrix{
T_k: & \setlength{\unitlength}{1pt}
\begin{picture}(140,20)(0,0)

\put(-2,20){\vector(-1,0){0}}

\qbezier(0,0)(5,0)(10,10)
\qbezier(10,10)(15,20)(20,20)

\qbezier(0,20)(5,20),(9,12)
\qbezier(11,8)(15,0),(20,0)

\qbezier(20,0)(25,0)(30,10)
\qbezier(30,10)(35,20)(40,20)

\qbezier(20,20)(25,20),(29,12)
\qbezier(31,8)(35,0),(40,0)

\qbezier(40,0)(45,0)(50,10)
\qbezier(50,10)(55,20)(60,20)

\qbezier(40,20)(45,20),(49,12)
\qbezier(51,8)(55,0),(60,0)

\qbezier(60,0)(65,0)(70,10)
\qbezier(70,10)(75,20)(80,20)

\qbezier(60,20)(65,20),(69,12)
\qbezier(71,8)(75,0),(80,0)

\put(82,0){\vector(1,0){0}}

\put(85,8){$\cdots$}

\put(98,20){\vector(-1,0){0}}

\qbezier(100,0)(105,0)(110,10)
\qbezier(110,10)(115,20)(120,20)

\qbezier(100,20)(105,20),(109,12)
\qbezier(111,8)(115,0),(120,0)

\qbezier(120,0)(125,0)(130,10)
\qbezier(130,10)(135,20)(140,20)

\qbezier(120,20)(125,20),(129,12)
\qbezier(131,8)(135,0),(140,0)

\put(142,0){\vector(1,0){0}}

\end{picture} \\
T_{-k}: & \setlength{\unitlength}{1pt}
\begin{picture}(140,20)(0,-20)

\put(-2,0){\vector(-1,0){0}}

\qbezier(0,0)(5,0)(10,-10)
\qbezier(10,-10)(15,-20)(20,-20)

\qbezier(0,-20)(5,-20),(9,-12)
\qbezier(11,-8)(15,0),(20,0)

\qbezier(20,0)(25,0)(30,-10)
\qbezier(30,-10)(35,-20)(40,-20)

\qbezier(20,-20)(25,-20),(29,-12)
\qbezier(31,-8)(35,0),(40,0)

\qbezier(40,0)(45,0)(50,-10)
\qbezier(50,-10)(55,-20)(60,-20)

\qbezier(40,-20)(45,-20),(49,-12)
\qbezier(51,-8)(55,0),(60,0)

\qbezier(60,0)(65,0)(70,-10)
\qbezier(70,-10)(75,-20)(80,-20)

\qbezier(60,-20)(65,-20),(69,-12)
\qbezier(71,-8)(75,0),(80,0)

\put(82,-20){\vector(1,0){0}}

\put(85,-12){$\cdots$}

\put(98,0){\vector(-1,0){0}}

\qbezier(100,0)(105,0)(110,-10)
\qbezier(110,-10)(115,-20)(120,-20)

\qbezier(100,-20)(105,-20),(109,-12)
\qbezier(111,-8)(115,0),(120,0)

\qbezier(120,0)(125,0)(130,-10)
\qbezier(130,-10)(135,-20)(140,-20)

\qbezier(120,-20)(125,-20),(129,-12)
\qbezier(131,-8)(135,0),(140,0)

\put(142,-20){\vector(1,0){0}}

\end{picture}
}
\]

\caption{Tangle diagrams $T_k$ and $T_{-k}$}\label{twisting-fig}

\end{figure}

Note that, reversing the orientation of one strand in $T_{\pm k}$, we get an open $2$-braid. Motivated by the results of Krasner and Lobb, we simplify the Khovanov-Rozansky chain complexes of $2$-braids and then apply the results to study the $\mathfrak{sl}(N)$ Rasmussen invariant.

Similar to \cite{Lobb-2-twists}, we consider three versions of the $\mathfrak{sl}(N)$ Khovanov-Rozansky chain complexes: the unreduced Khovanov-Rozansky chain complex $C_{x^{N+1}}$ with potential $x^{N+1}$ defined in \cite{KR1}, the equivariant Khovanov-Rozansky chain complex $C_{x^{N+1}-ax}$ with potential $x^{N+1}-ax$ defined in \cite{Krasner}, where $a$ is a variable of degree $2N$\footnote{Following the convention in \cite{KR1}, $x$ is a variable of degree $2$.}, and the deformed Khovanov-Rozansky chain complex $C_{x^{N+1}-x}$ with potential $x^{N+1}-x$ defined in \cite{Gornik}. The simplification of the chain complex works for all three versions of the Khovanov-Rozansky chain complexes. So, unless otherwise specified, ``the Khovanov-Rozansky chain complex $C$" stands for any of the above three versions. We denote by $H_{x^{N+1}}$, $H_{x^{N+1}-ax}$ and $H_{x^{N+1}-x}$ the homologies of $C_{x^{N+1}}$, $C_{x^{N+1}-ax}$ and $C_{x^{N+1}-x}$. Again, unless otherwise specified, ``the Khovanov-Rozansky homology $H$" stands for any of these three homologies. 

\begin{figure}[ht]
\[
\xymatrix{
\mathsf{b}^{2k}: & \setlength{\unitlength}{1pt}
\begin{picture}(160,20)(-10,-20)

\put(-10,0){\small{$x_3$}}

\put(-10,-20){\small{$x_4$}}

\qbezier(0,0)(5,0)(10,-10)
\qbezier(10,-10)(15,-20)(20,-20)

\qbezier(0,-20)(5,-20),(9,-12)
\qbezier(11,-8)(15,0),(20,0)

\qbezier(20,0)(25,0)(30,-10)
\qbezier(30,-10)(35,-20)(40,-20)

\qbezier(20,-20)(25,-20),(29,-12)
\qbezier(31,-8)(35,0),(40,0)

\qbezier(40,0)(45,0)(50,-10)
\qbezier(50,-10)(55,-20)(60,-20)

\qbezier(40,-20)(45,-20),(49,-12)
\qbezier(51,-8)(55,0),(60,0)

\qbezier(60,0)(65,0)(70,-10)
\qbezier(70,-10)(75,-20)(80,-20)

\qbezier(60,-20)(65,-20),(69,-12)
\qbezier(71,-8)(75,0),(80,0)

\put(82,-20){\vector(1,0){0}}

\put(82,0){\vector(1,0){0}}

\put(85,-12){$\cdots$}

\qbezier(100,0)(105,0)(110,-10)
\qbezier(110,-10)(115,-20)(120,-20)

\qbezier(100,-20)(105,-20),(109,-12)
\qbezier(111,-8)(115,0),(120,0)

\qbezier(120,0)(125,0)(130,-10)
\qbezier(130,-10)(135,-20)(140,-20)

\qbezier(120,-20)(125,-20),(129,-12)
\qbezier(131,-8)(135,0),(140,0)

\put(142,-20){\vector(1,0){0}}

\put(142,0){\vector(1,0){0}}

\put(145,0){\small{$x_1$}}

\put(145,-20){\small{$x_2$}}

\end{picture} \\
\mathsf{b}^{-2k}: & \setlength{\unitlength}{1pt}
\begin{picture}(160,20)(-10,0)

\put(-10,20){\small{$x_3$}}

\put(-10,0){\small{$x_4$}}

\qbezier(0,0)(5,0)(10,10)
\qbezier(10,10)(15,20)(20,20)

\qbezier(0,20)(5,20),(9,12)
\qbezier(11,8)(15,0),(20,0)

\qbezier(20,0)(25,0)(30,10)
\qbezier(30,10)(35,20)(40,20)

\qbezier(20,20)(25,20),(29,12)
\qbezier(31,8)(35,0),(40,0)

\qbezier(40,0)(45,0)(50,10)
\qbezier(50,10)(55,20)(60,20)

\qbezier(40,20)(45,20),(49,12)
\qbezier(51,8)(55,0),(60,0)

\qbezier(60,0)(65,0)(70,10)
\qbezier(70,10)(75,20)(80,20)

\qbezier(60,20)(65,20),(69,12)
\qbezier(71,8)(75,0),(80,0)

\put(82,0){\vector(1,0){0}}

\put(82,20){\vector(1,0){0}}

\put(85,8){$\cdots$}

\qbezier(100,0)(105,0)(110,10)
\qbezier(110,10)(115,20)(120,20)

\qbezier(100,20)(105,20),(109,12)
\qbezier(111,8)(115,0),(120,0)

\qbezier(120,0)(125,0)(130,10)
\qbezier(130,10)(135,20)(140,20)

\qbezier(120,20)(125,20),(129,12)
\qbezier(131,8)(135,0),(140,0)

\put(142,0){\vector(1,0){0}}

\put(142,20){\vector(1,0){0}}

\put(145,20){\small{$x_1$}}

\put(145,0){\small{$x_2$}}

\end{picture}
}
\]

\caption{Braid diagrams $\mathsf{b}^{2k}$ and $\mathsf{b}^{-2k}$}\label{braids-fig}

\end{figure}
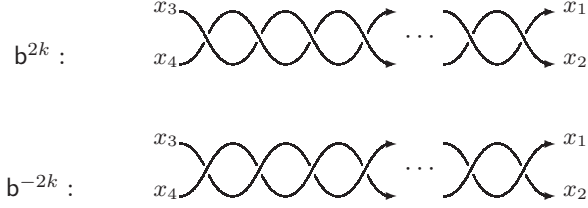

\subsection{The simplified chain complexes} Denote by $\mathcal{B}_2$ the $2$-strand braid group and by $\mathsf{b}$ the generator of $\mathcal{B}_2$ corresponding to a positive crossing. We mark the end points of $\mathsf{b}^{\pm 2k}$ as in Figure \ref{braids-fig}. 

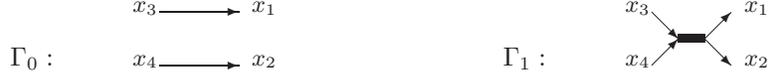
\begin{figure}[ht]
\[
\xymatrix{
\Gamma_0: & \setlength{\unitlength}{1pt}
\begin{picture}(50,20)(-10,0)

\put(-10,20){\small{$x_3$}}

\put(-10,0){\small{$x_4$}}

\put(0,0){\vector(1,0){30}}

\put(0,20){\vector(1,0){30}}

\put(35,20){\small{$x_1$}}

\put(35,0){\small{$x_2$}}

\end{picture} &&& \Gamma_1: & \setlength{\unitlength}{1pt}
\begin{picture}(50,20)(-10,0)

\put(-10,20){\small{$x_3$}}

\put(-10,0){\small{$x_4$}}

\put(0,0){\vector(1,1){10}}

\put(0,20){\vector(1,-1){10}}

\put(20,10){\vector(1,1){10}}

\put(20,10){\vector(1,-1){10}}

\put(35,20){\small{$x_1$}}

\put(35,0){\small{$x_2$}}

\linethickness{3pt}

\put(10,10){\line(1,0){10}}

\end{picture}
}
\]

\caption{MOY graphs $\Gamma_0$ and $\Gamma_1$}\label{resolutions-fig}

\end{figure}

Denote by $\Gamma_0$ and $\Gamma_1$ the marked MOY graphs in Figure \ref{resolutions-fig}. Recall that Khovanov and Rozansky defined in \cite[Section 6]{KR1} homogeneous homomorphisms of matrix factorizations $C(\Gamma_0) \xrightarrow{\chi^0} C(\Gamma_1)$ and $C(\Gamma_1) \xrightarrow{\chi^1} C(\Gamma_0)$ of $\zed_2$-degree $0$ and $q$-degree $1$.

For a graded matrix factorization $M$, denote by $M\{q^s\}$ the graded matrix factorization obtained from $M$ by shifting its $q$-grading up by $s$.

\begin{theorem}\label{thm-simplified-chain-complex}
Let $k$ be a positive integer. The Khovanov-Rozansky chain complex $C(\mathsf{b}^{2k})$ is homotopic to the chain complex $\mathscr{B}_k$ given by
\[
0 \rightarrow C(\Gamma_1)\{q^{s_{-2k}}\} \xrightarrow{x_1-x_3} C(\Gamma_1)\{q^{s_{-2k+1}}\}\xrightarrow{x_1-x_4} \cdots \xrightarrow{x_1-x_3} C(\Gamma_1)\{q^{s_{-1}}\} \xrightarrow{\chi^1} C(\Gamma_0)\{q^{2k(N-1)}\} \rightarrow 0,
\]
where 
\begin{itemize}
	\item $C(\Gamma_0)\{q^{2k(N-1)}\}$ has homological grading $0$;
	\item $C(\Gamma_1)\{q^{s_{-l}}\}$ has homological grading $-l$ and $s_{-l}=2k(N-1)+2l-1$;
	\item the differential map alternates between $x_1-x_3$ and $x_1-x_4$ from the homological grading $-2k$ to the homological grading $-2$. 
\end{itemize}
  
The Khovanov-Rozansky chain complex $C(\mathsf{b}^{-2k})$ is homotopic to the chain complex $\mathscr{B}_{-k}$ given by
\[
0 \rightarrow C(\Gamma_0)\{q^{-2k(N-1)}\} \xrightarrow{\chi^0} C(\Gamma_1)\{q^{s_{1}}\}\xrightarrow{x_1-x_3} C(\Gamma_1)\{q^{s_{2}}\}\xrightarrow{x_1-x_4} \cdots \xrightarrow{x_1-x_3} C(\Gamma_1)\{q^{s_{2k}}\} \rightarrow 0,
\]
where 
\begin{itemize}
	\item $C(\Gamma_0)\{q^{-2k(N-1)}\}$ has homological grading $0$;
	\item $C(\Gamma_1)\{q^{s_{l}}\}$ has homological grading $l$ and $s_{l}=-2k(N-1)-2l+1$;
	\item the differential map alternates between $x_1-x_3$ and $x_1-x_4$ from the homological grading $1$ to the homological grading $2k-1$. 
\end{itemize}
\end{theorem}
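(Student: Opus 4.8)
The strategy is to build $C(\mathsf{b}^{2k})$ out of single crossings and then repeatedly delete acyclic summands by Gaussian elimination for complexes of matrix factorizations. All the work is done by two local facts, each valid for all three potentials (see \cite{KR1,Krasner,Gornik}). (i) The categorified MOY digon relation: stacking two singular resolutions gives a homotopy equivalence $C(\Gamma_1\circ\Gamma_1)\simeq C(\Gamma_1)\{q\}\oplus C(\Gamma_1)\{q^{-1}\}$ (here $\circ$ denotes composition of MOY graphs), and under this splitting the maps built from $\chi^0$ and $\chi^1$ relating $C(\Gamma_0)$, $C(\Gamma_1\circ\Gamma_1)$ and $C(\Gamma_1)$ have a matrix entry that is an isomorphism. (ii) The Khovanov--Rozansky composition identities: $\chi^1\circ\chi^0$ is, up to sign, multiplication by $x_1-x_3$ (after rewriting the linear form using the relation $x_1+x_2=x_3+x_4$ that holds on these factorizations), and the same mechanism at the other side of a crossing produces $x_1-x_4$. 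I would open the proof by recording these facts together with the grading conventions that present $C(\mathsf{b})$ as the two-term complex $[\,C(\Gamma_1)\to C(\Gamma_0)\,]$ (via $\chi^1$, suitably $q$-shifted) and $C(\mathsf{b}^{-1})$ as $[\,C(\Gamma_0)\to C(\Gamma_1)\,]$ (via $\chi^0$).

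I would do the base case $k=1$ explicitly. The complex $C(\mathsf{b}^2)=C(\mathsf{b})\otimes C(\mathsf{b})$ is the total complex of a commuting square with corners $C(\Gamma_0\circ\Gamma_0)\cong C(\Gamma_0)$, $C(\Gamma_0\circ\Gamma_1)\cong C(\Gamma_1)$, $C(\Gamma_1\circ\Gamma_0)\cong C(\Gamma_1)$, $C(\Gamma_1\circ\Gamma_1)$ and edge maps assembled from $\chi^0,\chi^1$. Applying (i) to the last corner replaces one homological degree by $C(\Gamma_1)\{q\}\oplus C(\Gamma_1)\{q^{-1}\}$; two applications of Gaussian elimination --- legitimate because in each case the entry being cancelled is the isomorphism supplied by (i) --- collapse the square to $C(\Gamma_1)\{q^{s_{-2}}\}\xrightarrow{x_1-x_3}C(\Gamma_1)\{q^{s_{-1}}\}\xrightarrow{\chi^1}C(\Gamma_0)\{q^{2(N-1)}\}$, the middle arrow being $\chi^1\chi^0$ rewritten via (ii). Carrying the $q$-shifts through the mapping-cone conventions gives $s_{-1}=2(N-1)+1$, $s_{-2}=2(N-1)+3$, i.e.\ exactly $\mathscr{B}_1$.

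For the inductive step, assume $C(\mathsf{b}^{2k-2})\simeq\mathscr{B}_{k-1}$. Since the homotopy type of $C$ is preserved under composition of tangles, $C(\mathsf{b}^{2k})\simeq\mathscr{B}_{k-1}\otimes C(\mathsf{b}^2)\simeq\mathscr{B}_{k-1}\otimes\mathscr{B}_1$, the tensor product taken over the ring of the two glued endpoints. In the resulting total complex the row coming from the $C(\Gamma_0)$-term of $\mathscr{B}_{k-1}$ reproduces a copy of $\mathscr{B}_1$, while each row coming from a $C(\Gamma_1)$-term of $\mathscr{B}_{k-1}$ becomes, after applying (i), a complex $\bigl(C(\Gamma_1)\{q\}\oplus C(\Gamma_1)\{q^{-1}\}\bigr)\to\bigl(C(\Gamma_1)\{q\}\oplus C(\Gamma_1)\{q^{-1}\}\bigr)\to C(\Gamma_1)$ with verticals inherited from $\mathscr{B}_{k-1}$. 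Running Gaussian elimination until nothing more cancels, I would check that two new $C(\Gamma_1)$-terms survive at homological degrees $-2k$ and $-2k+1$, that the $q$-shift of every old term has risen by $2(N-1)$ while the new ones are $s_{-2k},s_{-2k+1}$, and that the surviving differentials are still alternately $x_1-x_3$ and $x_1-x_4$; this yields $\mathscr{B}_k$. The negative-power statement follows either by repeating the argument with $C(\mathsf{b}^{-1})$ in place of $C(\mathsf{b})$ or by passing to the mirror image, which interchanges $\chi^0\leftrightarrow\chi^1$ and sends $\mathscr{B}_k$ to $\mathscr{B}_{-k}$ after matching grading conventions.

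I expect the main obstacle to be exactly this last verification in the inductive step: showing that the differentials in the simplified complex come out as precisely $x_1-x_3$ and $x_1-x_4$ in the stated alternating order --- not some other differences or linear combinations --- and that every secondary map produced along the chain of Gaussian eliminations vanishes. This forces careful attention to orientations, to which endpoint variables the $\chi$-maps register once endpoints have been glued, and to signs. By contrast the $q$-grading bookkeeping, though lengthy, is mechanical, and the three potentials $x^{N+1}$, $x^{N+1}-ax$, $x^{N+1}-x$ differ only by inessential shifts already built into $C(\mathsf{b}^{\pm1})$, so no separate argument is needed for them.
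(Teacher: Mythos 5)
Your overall strategy --- the digon (second MOY) decomposition, Gaussian elimination, and induction on $k$ by tensoring with $C(\mathsf{b}^2)$ --- is exactly the route the paper takes. But the step you yourself flag as ``the main obstacle'' is a genuine gap rather than a routine verification, and your ingredient (ii) as stated cannot close it. On $C(\Gamma_1)$ the relation $x_1+x_2=x_3+x_4$ rewrites $\chi^0\circ\chi^1=(x_1-x_4)\,\id$ as multiplication by $x_3-x_2$, which is \emph{not} $\pm(x_1-x_3)$; so the differentials $x_1-x_3$ appearing in $\mathscr{B}_k$ cannot be obtained from the $\chi$-maps alone by any rewriting. Moreover, if you split $C(\Gamma_1\circ\Gamma_1)$ using only an inclusion built from $\chi^0$ and then cancel against an adjacent term, Gaussian elimination produces a correction term $\varepsilon-\gamma\phi^{-1}\delta$ that your setup gives you no way to compute, so you cannot conclude that ``every secondary map vanishes.''

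The paper closes this gap by enlarging the toolkit. It brings in Gornik's morphisms $\xi^0,\xi^1$ through the virtual-crossing resolution $\Gamma_2$, which satisfy $\xi^0\circ\xi^1=(x_1-x_3)\,\id_{C(\Gamma_1)}$ --- this is where $x_1-x_3$ actually comes from --- together with the (unique up to homotopy and scaling) $q$-degree $-1$ morphisms $J,P$ for the digon, satisfying $P\circ\mathtt{m}(x_5)\circ J\simeq\id$, $P\circ J\simeq 0$, and the key vanishing $\chi_r^1\circ\xi_r^0\simeq 0$ (a $\zed_2$-degree count). Choosing the splitting of the digon term to be $(J,\xi_r^0)$ or $(\xi_r^0,J)$, according to which neighbour is being cancelled, forces the relevant off-diagonal entry of the differential to be nullhomotopic, so the ``$\delta$ or $\gamma$ is $0$'' case of Gaussian elimination applies and no correction terms arise; the surviving maps are then identified with $c\,(x_1-x_3)$ and $(x_1-x_4)$ using the one-dimensionality of the space of homotopy classes of $q$-degree-$1$ morphisms. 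Without the $\xi$-maps and this careful choice of splitting (or an equivalent uniqueness argument), your induction cannot pin down the surviving differentials, so the proof as proposed does not go through.
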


\subsection{A long exact sequence}\label{subsec-exact} Using Krasner's simplification of $C(T_{\pm k})$ in \cite{Krasner-2-twists}, Lobb \cite{Lobb-2-twists} introduced several new structural results about the Khovanov-Rozansky homology, including two directed systems of chain maps, some exact sequences and a generalization of the Khovanov-Rozansky homology that allows $T_{\pm \infty}$ to be part of a link diagram. Similar structures can easily be constructed using Theorem \ref{thm-simplified-chain-complex}. We leave the details to the reader. Here, we only construct in details a long exact sequence of the Khovanov-Rozansky homology, which we will use to study the $\mathfrak{sl}(N)$ Rasmussen invariant. 

Fix a positive integer $k$. Consider the chain complex $\mathscr{B}_k$ in Theorem \ref{thm-simplified-chain-complex}. We define a chain map 
\[
F_k: \mathscr{B}_{k-1}\{q^{2(N-1)}\} \rightarrow \mathscr{B}_{k}
\] 
using the diagram
{\tiny
\[
\xymatrix{
&& 0 \ar[r] & C(\Gamma_1) \ar[r]^{x_1-x_3} \ar[d]^{\id} & \cdots \ar[r]^{x_1-x_3} &  C(\Gamma_1) \ar[r]^{\chi^1} \ar[d]^{\id}& C(\Gamma_0) \ar[r] \ar[d]^{\id}& 0 \\
0 \ar[r] & C(\Gamma_1) \ar[r]^{x_1-x_3} & C(\Gamma_1) \ar[r]^{x_1-x_4} & C(\Gamma_1) \ar[r]^{x_1-x_3}& \cdots \ar[r]^{x_1-x_3} &  C(\Gamma_1) \ar[r]^{\chi^1} & C(\Gamma_0) \ar[r] & 0,
}
\]
}

\noindent where the upper row is $\mathscr{B}_{k-1}\{q^{2(N-1)}\}$, the lower row is $\mathscr{B}_{k}$ and $F_k$ is given by the vertical arrows, which are all identity maps. (For simplicity, we omitted the $q$-grading shifts in the above diagram.)

Denote by $\mathscr{C}$ the mapping cone\footnote{In this paper, the homological grading of the mapping cone of $A^\ast \rightarrow B^\ast$ is defined so that the term $A^j\oplus B^{j-1}$ has homological grading $j$.} of $C(\Gamma_1) \xrightarrow{x_1-x_3} C(\Gamma_1)\{q^{-2}\}$. Then, by the definition of $F_k$, there is a short exact sequence

\begin{equation}\label{short-exact}
0 \rightarrow \mathscr{B}_{k-1}\{q^{2(N-1)}\} \xrightarrow{F_k} \mathscr{B}_{k} \rightarrow \mathscr{C}\|-2k\|\{q^{2k(N+1)-1}\} \rightarrow 0,
\end{equation}
where ``$\|-2k\|$" means that the homological grading of $\mathscr{C}$ is shifted down by $2k$.

\begin{figure}[ht]
\[
\xymatrix{
D_0: &  \setlength{\unitlength}{1pt}
\begin{picture}(80,20)(-10,0)

\put(-10,20){\small{$x_3$}}

\put(-10,0){\small{$x_4$}}

\put(0,0){\vector(1,0){60}}

\put(0,20){\vector(1,0){60}}

\put(65,20){\small{$x_1$}}

\put(65,0){\small{$x_2$}}

\end{picture} \\
D_k: & \setlength{\unitlength}{1pt}
\begin{picture}(80,20)(-10,-20)

\put(-10,0){\small{$x_3$}}

\put(-10,-20){\small{$x_4$}}

\qbezier(0,0)(5,0)(10,-10)
\qbezier(10,-10)(15,-20)(20,-20)

\qbezier(0,-20)(5,-20),(9,-12)
\qbezier(11,-8)(15,0),(20,0)

\put(22,-20){\vector(1,0){0}}

\put(22,0){\vector(1,0){0}}

\put(25,-12){$\cdots$}

\qbezier(40,0)(45,0)(50,-10)
\qbezier(50,-10)(55,-20)(60,-20)

\qbezier(40,-20)(45,-20),(49,-12)
\qbezier(51,-8)(55,0),(60,0)

\put(62,-20){\vector(1,0){0}}

\put(62,0){\vector(1,0){0}}

\put(65,0){\small{$x_1$}}

\put(65,-20){\small{$x_2$}}

\end{picture} \\ 
\Gamma: & \input{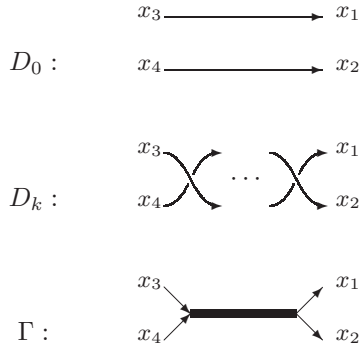}
}
\]

\caption{Local diagrams of links $D_0$, $D_k$ and the knotted MOY graph $\Gamma$}\label{D-k-Gamma-fig}

\end{figure}

Let $D_0$ be a link diagram containing a pair of parallel arcs in the same direction. Denote by $D_k$ the link diagram obtained from $D_0$ by replacing this pair of parallel arcs with the open $2$-braid $\mathsf{b}^{2k}$. Also, denote by $\Gamma$ the knotted MOY graph obtained from $D_0$ by replacing this pair of parallel arcs with a wide edge. (See Figure \ref{D-k-Gamma-fig}. Also note the markings we put on $D_0$, $D_k$ and $\Gamma$.) 

Denote by $\mathscr{C}(\Gamma)$ the mapping cone of $C(\Gamma) \xrightarrow{x_1-x_3} C(\Gamma)\{q^{-2}\}$
and by $\mathscr{H}(\Gamma)$ the homology of $\mathscr{C}(\Gamma)$.

\begin{theorem}\label{thm-exact-sequence}
There are long exact sequences
\begin{equation}\label{long-exact-sequence}
\cdots \rightarrow H^{j}(D_{k-1})\{q^{2(N-1)}\} \rightarrow H^{j}(D_{k}) \rightarrow \mathscr{H}^{j+2k}(\Gamma)\{q^{2k(N+1)-1}\} \rightarrow H^{j+1}(D_{k-1})\{q^{2(N-1)}\} \rightarrow \cdots,
\end{equation}
and
\begin{equation}\label{minor-long-exact-sequence}
\cdots \rightarrow H^{j-1}(\Gamma)\{q^{-2}\} \rightarrow \mathscr{H}^{j}(\Gamma) \rightarrow H^{j}(\Gamma) \xrightarrow{x_1-x_3} H^{j}(\Gamma)\{q^{-2}\} \rightarrow \cdots,
\end{equation}
where $H^{j}(D_{k})$ is the component of $H(D_{k})$ of homological grading $j$ and so on.
\end{theorem}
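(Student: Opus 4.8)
The plan is to obtain both sequences by feeding the local short exact sequence (\ref{short-exact}) into the rest of the diagram $D_0$ and then invoking only formal homological algebra. Write $R=\mathbb{Q}[x_1,x_2,x_3,x_4]$ for the ring attached to the four boundary points in Figures \ref{resolutions-fig} and \ref{D-k-Gamma-fig}, and let $M$ be the matrix factorization over $R$ associated with the complement of the relevant pair of parallel arcs in $D_0$; by construction $M$ is a finitely generated free $R$-module with potential opposite to that of $C(\Gamma_0)$ and $C(\Gamma_1)$. Let $\mathcal{G}(-)=-\otimes_R M$ be the corresponding gluing functor on bounded chain complexes of matrix factorizations. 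By the very definition of the Khovanov--Rozansky invariant of a link diagram, $\mathcal{G}\bigl(C(\mathsf{b}^{2k})\bigr)=C(D_k)$, $\mathcal{G}\bigl(C(\mathsf{b}^{2(k-1)})\bigr)=C(D_{k-1})$ and $\mathcal{G}\bigl(C(\Gamma_1)\bigr)=C(\Gamma)$, and $\mathcal{G}$ is additive, sends homotopy equivalences to homotopy equivalences, and commutes with mapping cones. Combining the first of these with Theorem \ref{thm-simplified-chain-complex} gives homotopy equivalences $\mathcal{G}(\mathscr{B}_k)\simeq C(D_k)$ and $\mathcal{G}\bigl(\mathscr{B}_{k-1}\{q^{2(N-1)}\}\bigr)\simeq C(D_{k-1})\{q^{2(N-1)}\}$, while the last identity gives $\mathcal{G}(\mathscr{C})=\mathscr{C}(\Gamma)$, since both $\mathscr{C}$ and $\mathscr{C}(\Gamma)$ are the mapping cones of multiplication by $x_1-x_3$ on $C(\Gamma_1)$ and on $C(\Gamma)$, respectively.

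For the sequence (\ref{long-exact-sequence}) I would apply $\mathcal{G}$ to (\ref{short-exact}). Since $F_k$ is built from identity maps, (\ref{short-exact}) is, in each homological degree, a split short exact sequence of matrix factorizations; hence applying the additive functor $\mathcal{G}$ yields a short exact sequence of bounded chain complexes of matrix factorizations, which after substituting the homotopy equivalences above reads
\[
0\longrightarrow C(D_{k-1})\{q^{2(N-1)}\}\longrightarrow C(D_k)\longrightarrow \mathscr{C}(\Gamma)\|-2k\|\{q^{2k(N+1)-1}\}\longrightarrow 0.
\]
Passing to the Khovanov--Rozansky homology $H^\bullet$ (i.e.\ the homology of the totalization of each of these complexes of matrix factorizations) turns this into a long exact sequence, and using $H^j\bigl(\mathscr{C}(\Gamma)\|-2k\|\bigr)=\mathscr{H}^{j+2k}(\Gamma)$ one reads off exactly (\ref{long-exact-sequence}); all $q$-grading shifts are inherited verbatim from (\ref{short-exact}).

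For the sequence (\ref{minor-long-exact-sequence}) there is nothing to glue: it is the long exact sequence associated with the tautological short exact sequence of chain complexes
\[
0\longrightarrow C(\Gamma)\{q^{-2}\}\longrightarrow \mathscr{C}(\Gamma)\longrightarrow C(\Gamma)\longrightarrow 0
\]
expressing the mapping cone $\mathscr{C}(\Gamma)$ of $C(\Gamma)\xrightarrow{x_1-x_3}C(\Gamma)\{q^{-2}\}$ as an extension, where the first term is placed in the homological grading dictated by the mapping-cone convention of this paper (its degree-$j$ part is $C(\Gamma)^{j-1}\{q^{-2}\}$). The connecting homomorphism of this short exact sequence is, up to sign, the map induced on homology by multiplication by $x_1-x_3$, so passing to $H^\bullet$ gives (\ref{minor-long-exact-sequence}).

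The steps above are all formal once the behaviour of $\mathcal{G}$ is justified, and that justification is the one point requiring care. It rests on the standard structural facts about Khovanov--Rozansky matrix factorizations --- valid for the unreduced potential $x^{N+1}$ of \cite{KR1} and, by the same arguments, for the equivariant potential of \cite{Krasner} and the deformed potential of \cite{Gornik} --- namely that the matrix factorizations of MOY graphs are finitely generated and free over the boundary ring, and that closing up or gluing tangles amounts to tensoring the associated (complexes of) matrix factorizations over the boundary ring together with the Koszul contribution of the glued edges. Freeness of $M$ makes $\mathcal{G}$ exact, additivity makes it preserve the degreewise splitting of (\ref{short-exact}), and both properties make it preserve homotopies; everything else is bookkeeping of homological and $q$-grading shifts, all of which are already recorded in Theorem \ref{thm-simplified-chain-complex} and (\ref{short-exact}).
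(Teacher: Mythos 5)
Your proposal is correct and follows essentially the same route as the paper: the first sequence is the long exact sequence in homology of the (degreewise split, hence glueable) short exact sequence \eqref{short-exact} after identifying $\mathscr{B}_k$ with $C(\mathsf{b}^{2k})$ via Theorem \ref{thm-simplified-chain-complex}, and the second is the long exact sequence of the tautological short exact sequence of the mapping cone $\mathscr{C}(\Gamma)$. You merely make explicit the gluing functor and the degreewise splitting that the paper leaves implicit.
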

\begin{proof}
Long exact sequence \eqref{long-exact-sequence} is the long exact sequence in homology associated to the short exact sequence \eqref{short-exact} after making the identification $\mathscr{B}_k \simeq C(\mathsf{b}^{2k})$ of Theorem 1.1. Long exact sequence \eqref{minor-long-exact-sequence} is the long exact sequence in homology associated to the short exact sequence of the mapping cone $\mathscr{C}(\Gamma)$.
\end{proof}

\subsection{The Rasmussen invariants} Rasmussen \cite{Ras1} defined the concordance invariant $s$ using the deformed Khovanov homology constructed by Lee \cite{Lee2}. Based on the work of Gornik \cite{Gornik}, the author generalized $s$ in \cite{Wu7} to an $\mathfrak{sl}(N)$ Rasmussen invariant $s_N$ with $s_2=s$. Recently, Lobb proved in \cite{Lobb-gornik} that $s_N$ is a concordance invariant for every $N \geq 2$.

Assume that $D_0$ in Subsection \ref{subsec-exact} is a knot diagram. Then $D_k$ is also a knot diagram. Using Theorem \ref{thm-exact-sequence}, we prove that, as $|k|$ increases, $s_N(D_k)$ quickly becomes linearly dependent on $k$. 

\begin{theorem}\label{thm-linearity-general}
Assume that $D_0$ is a knot diagram with $c_+$ positive crossings and $c_-$ negative crossings. Then, for all $N\geq 2$,
\begin{equation}\label{eq-linearity-general}
s_N(D_k) = s_N(D_{k-1}) + 2(N-1) \text{ if } k \leq \frac{-c_+ -2}{2} \text{ or } k\geq \frac{c_- +2}{2}.
\end{equation}
\end{theorem}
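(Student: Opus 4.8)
The plan is to iterate the long exact sequence \eqref{long-exact-sequence} and control how $s_N$ changes at each step, using the standard sandwiching estimates for the Rasmussen invariant in terms of homological and quantum gradings. First I would recall (from \cite{Wu7}, \cite{Lobb-gornik}) that $s_N(D)$ is extracted from the deformed homology $H_{x^{N+1}-x}(D)$ — and its analogue for the equivariant theory — as, roughly, the quantum grading of the generators of the free part, suitably normalized by the writhe and the number of Seifert circles; in particular the grading shifts appearing in $\mathscr{B}_k$ are precisely designed so that passing from $D_{k-1}$ to $D_k$ the ``expected'' contribution to $s_N$ is the shift $2(N-1)$ that appears in $H^j(D_{k-1})\{q^{2(N-1)}\}$. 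The key point is that the only obstruction to $s_N(D_k)=s_N(D_{k-1})+2(N-1)$ is a possible contribution coming from the third term $\mathscr{H}^{j+2k}(\Gamma)\{q^{2k(N+1)-1}\}$ of \eqref{long-exact-sequence}, and I would show that for $k$ in the stated range this term sits in a homological grading that is too negative (for $\mathscr{B}_k$) or too positive (for $\mathscr{B}_{-k}$) to interact with the generator computing $s_N$.

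The key steps, in order:

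\begin{enumerate}
\item Fix the normalization: write $s_N(D_k)$ as the quantum grading of the surviving generator of the free part of $H_{x^{N+1}-x}(D_k)$ (equivalently, of the localized equivariant homology), adjusted by $(N-1)$ times the writhe of $D_k$ and by the number of Seifert circles, so that this number is a diagram‑independent invariant. Since $D_k$ is obtained from $D_0$ by inserting $2k$ positive crossings (for $k>0$), the writhe satisfies $w(D_k)=w(D_0)+2k$, and unravelling the definitions shows the statement \eqref{eq-linearity-general} is equivalent to the assertion that $s_N$ computed from the \emph{un-normalized} homology changes by exactly $-2(N-1)k$ (the combined effect of the writhe correction and the grading shift $\{q^{2k(N-1)}\}$ built into $\mathscr{B}_k$) — i.e.\ the generator ``rides along'' the map $F_k$.

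\item Establish the homological‑grading bound. The generator of $H_{x^{N+1}-x}(D_0)$ computing $s_N(D_0)$ lives in homological grading $0$ after the standard normalization; more precisely, in the un-normalized complex $C(D_0)$ it lives in homological grading lying between $-c_-$ and $c_+$ (these are the extreme homological gradings of $C(D_0)$). Now iterate: using $\mathscr{B}_k \simeq C(\mathsf b^{2k})$ and the construction of $D_k$ from $D_0$, the complex $C(D_k)$ has homological gradings in the window $[-c_- - 2k, c_+]$ for $k>0$. Meanwhile the third term of \eqref{long-exact-sequence} occupies homological grading $j$ with $j+2k$ in the range where $\mathscr H(\Gamma)$ is nonzero — and, reading off the homological span of $\mathscr C(\Gamma)$ from \eqref{minor-long-exact-sequence} together with the homological span of $C(\Gamma)$ (which is that of $C(D_0)$ with the wide edge, hence in $[-c_-, c_+]$), one finds $\mathscr H^{j+2k}(\Gamma)$ can only be nonzero for $j$ in $[-c_- - 2k - 1, c_+ - 2k]$. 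The hypothesis $k \geq (c_-+2)/2$ forces $c_+ - 2k \leq c_+ - c_- - 2 < -c_-$ (using $c_+ \le$ total crossings, but really the cleaner inequality is $c_+-2k<-c_-$ which is exactly $k>(c_++c_-)/2$; the stated bound is a convenient sufficient form), so the third term is supported in strictly negative homological gradings, disjoint from the grading of the $s_N$‑generator.

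\item Conclude via the long exact sequence. Since the connecting‑map triangle \eqref{long-exact-sequence} at the relevant homological grading $j$ (the one carrying the $s_N$‑generator) has its $\mathscr H(\Gamma)$‑term vanishing on both sides, $F_k$ induces an isomorphism $H^{j}(D_{k-1})\{q^{2(N-1)}\}\xrightarrow{\sim} H^{j}(D_{k})$ in that grading, and this isomorphism is $q$‑graded, carrying the free generator to a free generator of the same quantum degree. Tracking the built-in grading shift $\{q^{2(N-1)}\}$ and the writhe correction gives \eqref{eq-linearity-general}. The case $k\le (-c_+-2)/2$ is handled symmetrically using $\mathscr B_{-k}\simeq C(\mathsf b^{-2k})$: there $C(D_k)$ has homological gradings in $[-c_-, c_+ + 2k]$ (note here $k<0$, so $2k<0$ and this is $[-c_-, c_++2k]$ with $c_++2k$ potentially large; one runs the analogous argument with the sequence \eqref{long-exact-sequence} read in the reverse direction, i.e.\ comparing $D_k$ to $D_{k+1}$), and the $\mathscr H(\Gamma)$‑term is pushed into strictly positive homological grading, again disjoint from the $s_N$‑generator.
\end{enumerate}

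The main obstacle I anticipate is Step~2: pinning down precisely \emph{which} homological grading carries the generator detecting $s_N$, and showing it is stable under the iteration. The subtlety is that $s_N$ is a priori defined via a filtration/grading on homology, not by isolating a single homological degree, so one must use the fact that for the deformed (Gornik) potential $x^{N+1}-x$ the homology of a knot is free of rank $N$ concentrated in homological grading $0$ (after normalization) — this is Gornik's theorem, extended to $s_N$ in \cite{Wu7} — so that the generator automatically lives in the single grading $0$. Once that is in hand, the homological‑span bookkeeping for $C(\Gamma)$ and $\mathscr C(\Gamma)$ is routine (it is just counting the crossings of $D_0$), and the rest follows formally from the long exact sequence. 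A secondary care point is making sure the quantum‑grading shifts in \eqref{short-exact} and \eqref{long-exact-sequence} are reconciled with the normalization convention for $s_N$; this is a finite check that the constant $2(N-1)$, and not some other multiple of $(N-1)$, is the correct increment, which the shift $\{q^{2(N-1)}\}$ on $\mathscr B_{k-1}$ in $F_k$ is precisely engineered to produce.
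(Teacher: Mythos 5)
Your overall strategy --- feed the long exact sequence \eqref{long-exact-sequence} into the homological degree carrying the $s_N$-generator and show the $\mathscr H(\Gamma)$-terms do not interfere --- is the right skeleton, but two of your key steps do not work as written, and the paper's proof uses an ingredient you are missing entirely.

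First, the homological-span bookkeeping in your Step~2 cannot produce the stated bound, and you essentially concede this when you fall back on $k>(c_++c_-)/2$. You also have the sign convention reversed: in this normalization positive crossings occupy \emph{negative} homological degrees (note $\mathscr B_k$ is supported in $[-2k,0]$), so $C(\Gamma)$ is supported in $[-c_+,c_-]$, not $[-c_-,c_+]$. With the correct convention the \emph{outgoing} term $\mathscr H^{2k}(\Gamma)$ does vanish once $2k-1>c_-$, i.e.\ $k\ge\frac{c_-+2}{2}$, which gives surjectivity of $H^0(D_{k-1})\{q^{2(N-1)}\}\to H^0(D_k)$. But your claim that the $\mathscr H(\Gamma)$-term vanishes ``on both sides'' fails: the \emph{incoming} term $\mathscr H^{2k-1}(\Gamma)$ involves $C^{2k-2}(\Gamma)$, which can be nonzero when $2k-2=c_-$, exactly at the boundary of the stated range. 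So a pure grading argument does not give injectivity. The paper handles this with an argument you do not have: since $D_0$ is a knot, the edges $e_1$ and $e_4$ are connected outside the local picture, so every Gornik state $\varphi$ of $\Gamma$ satisfies $\varphi(e_1)=\varphi(e_4)\ne\varphi(e_3)$; hence $x_1-x_3$ acts invertibly on $H_{x^{N+1}-x}(\Gamma)$, so $\mathscr H_{x^{N+1}-x}(\Gamma)=0$, and by Lemmas \ref{lemma-graph-homology-free} and \ref{lemma-free-chain-complex} the equivariant cone homology $\mathscr H_{x^{N+1}-ax}(\Gamma)$ is purely $\C[a]$-torsion in \emph{every} homological degree. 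The image of the incoming term is then torsion, so the map is injective on the free part regardless of gradings.

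Second, you run the argument in the deformed (Gornik) theory and speak of ``the generator computing $s_N$.'' There $s_N$ is extracted from a \emph{filtration}, and an isomorphism produced by a long exact sequence of filtered complexes need not be a filtered isomorphism, so you cannot directly conclude the quantum shift is exactly $2(N-1)$. The paper sidesteps this by working in the equivariant theory $H_{x^{N+1}-ax}$, where the $q$-degree is an honest grading and Corollary \ref{cor-H-equi-structure} identifies the graded rank of the free part of $H^0_{x^{N+1}-ax}(K)$ as $\sum_{l=1}^N q^{N+1-2l+s_N(K)}$; a graded isomorphism of free parts then yields \eqref{eq-linearity-general} immediately. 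Your final mirror-image step for $k\le\frac{-c_+-2}{2}$ agrees with the paper's. To repair your proposal you need (i) the state-basis/torsion argument for injectivity and (ii) the equivariant structure theorem in place of filtration-tracking.
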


The behavior under cabling of the concordance invariant $\tau$ defined by  Ozsv\'ath, Szab\'o \cite{OS-4-ball} and Rasmussen \cite{Ras-thesis} is well-understood through the works of Hedden \cite{Hedden-cable-1,Hedden-cable-2}, Van Cott \cite{VanCott} and Hom \cite{Hom-thesis}. We know much less about $s_N$. Though, we do know from \cite[Theorem 9]{VanCott} and \cite[Theorem 1.7]{Lobb-gornik} that, for any knot $K$ and relatively prime integers $m$ and $n$, the $\mathfrak{sl}(N)$ Rasmussen invariant of the $(m,lm+n)$ cable of $K$ is linear with respect to $l$ for large $l$.

Next, we refine Theorem \ref{thm-linearity-general} for $(2,2k+1)$ cable knots and conclude that, for these cables, the linearity of $s_N$ starts rather quickly. This is especially true for $s_2$, which allows a simple computation of $s_2$ for $(2,2k+1)$ cables of slice or amphicheiral knots.

\begin{theorem}\label{thm-linearity-2cable}
Suppose that $K$ is a knot with a diagram that has $c_+$ positive crossings and $c_-$ negative crossings. Denote by $K_{2,2k+1}$ the $(2,2k+1)$ cable of $K$. Then, for all $N\geq 2$,
\begin{equation}\label{eq-linearity-2cable-N}
s_N(K_{2,2k+1}) = s_N(K_{2,2k-1}) + 2(N-1) \text{ if } k \leq -c_+-1 \text{ or } k\geq c_-+1.
\end{equation}
Moreover,
\begin{equation}\label{eq-linearity-2cable-2}
s_2(K_{2,2k+1}) = s_2(K_{2,2k-1}) + 2 \text{ if } k \neq 0.
\end{equation}
\end{theorem}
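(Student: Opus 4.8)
The plan is to realize $K_{2,2k+1}$ as one of the knots $D_k$ of Subsection~\ref{subsec-exact}, apply the long exact sequences of Theorem~\ref{thm-exact-sequence}, and then sharpen the crossing-number bookkeeping of Theorem~\ref{thm-linearity-general} by exploiting the special shape of the relevant knotted MOY graph. Concretely, I would fix a diagram $\mathfrak{D}$ of $K$ with $c_+$ positive and $c_-$ negative crossings and take $D_0$ to be the diagram of $K_{2,1}$ obtained by forming the blackboard-framed $2$-cable of $\mathfrak{D}$ and inserting a short open $2$-braid in the cabling region to correct the framing to $1$. Then $D_k$, the result of inserting $\mathsf{b}^{2k}$ in the cabling region, is a diagram of $K_{2,2k+1}$, consecutive members of the family differ by $\mathsf{b}^{\pm2}$, and the knotted MOY graph $\Gamma$ of Figure~\ref{D-k-Gamma-fig} attached to this family is the $2$-cable of $\mathfrak{D}$ carrying a single wide edge in the cabling region. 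Feeding $D_0$ into Theorem~\ref{thm-linearity-general} directly would only give linearity for $|k|$ of order $c_++c_-$, since $D_0$ has about $4(c_++c_-)$ crossings; the point is to do better.

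The improvement comes from estimating the $q$-degrees of $H(\Gamma)$, and hence via~\eqref{minor-long-exact-sequence} those of $\mathscr{H}(\Gamma)$, in terms of $c_+$ and $c_-$ alone rather than in terms of the crossing number of $D_0$. For this I would simplify the $2$-cable of $\mathfrak{D}$ block by block --- each blackboard-framed crossing of $\mathfrak{D}$ turns into a $\mathsf{b}^{\pm2}$ block, which Theorem~\ref{thm-simplified-chain-complex} replaces by a very small complex --- and apply the MOY/Khovanov--Rozansky decomposition relations to the wide edge together with the two thin cable strands running alongside $\mathfrak{D}$. Either way one should obtain that $C(\Gamma)$ is homotopic to a finite direct sum of grading-shifted copies of the Khovanov--Rozansky complex of a diagram of $K$ with at most $c_+$ positive and $c_-$ negative crossings, with explicit shifts, which controls the $q$-degree support of $H(\Gamma)$ and of $\mathscr{H}(\Gamma)$ in terms of $c_\pm$. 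Running the degree chase of Theorem~\ref{thm-linearity-general} with these bounds --- tracking where the Rasmussen generator of $H(D_k)$ sits in~\eqref{long-exact-sequence} --- one finds that the map $H(D_{k-1})\{q^{2(N-1)}\}\to H(D_k)$ carries the Rasmussen generator to the Rasmussen generator as soon as the shifts $\|-2k\|$ and $q^{2k(N+1)-1}$ push the contribution of $\mathscr{H}(\Gamma)$ clear of the relevant homological and $q$-degrees, which happens exactly when $k\le -c_+-1$ or $k\ge c_-+1$. This gives~\eqref{eq-linearity-2cable-N}; the opposite inequality $|s_N(K_{2,2k+1})-s_N(K_{2,2k-1})|\le 2(N-1)$, useful as a consistency check, also follows from the genus-one cobordism between $K_{2,2k-1}$ and $K_{2,2k+1}$ given by a crossing change in the cabling region.

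For~\eqref{eq-linearity-2cable-2} I would specialize to $N=2$, where this estimate becomes essentially optimal because the wide edge is ``transparent'' for $\mathfrak{sl}(2)$ (the exterior square of the fundamental representation is trivial): the reduction above then collapses $C(\Gamma)$ to a single grading-shifted copy of the $\mathfrak{sl}(2)$ complex of $\mathfrak{D}$. Hence $H(\Gamma)$ is supported in one homological degree and spans a $q$-degree window of width $2$ (it is a grading shift of $H(K)$), so $\mathscr{H}(\Gamma)$ is supported in two adjacent homological degrees. Consequently the group $\mathscr{H}^{j+2k}(\Gamma)\{q^{6k-1}\}$ appearing in~\eqref{long-exact-sequence} can interfere with the Rasmussen generator of $H(D_k)$ only when both its homological degree, shifted by $\|-2k\|$, and its $q$-degree, shifted by $q^{6k-1}$, are compatible with those of that generator; a short inspection of the shifts shows this is possible only for $k=0$, the step passing through the $2$-component link $K_{2,0}$. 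For every $k\neq 0$ one therefore gets a graded isomorphism $H(D_{k-1})\{q^2\}\xrightarrow{\sim}H(D_k)$ respecting Rasmussen generators, hence $s_2(D_k)=s_2(D_{k-1})+2$, which is~\eqref{eq-linearity-2cable-2}.

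The step I expect to be the main obstacle is the reduction of $C(\Gamma)$ to a complex built from a small diagram of $K$ with the correct grading shifts: this requires a careful application of Theorem~\ref{thm-simplified-chain-complex} to the full-twist blocks of the cabled diagram together with the MOY decomposition of a wide edge running parallel to a cabled strand, and in the $\mathfrak{sl}(2)$ case a careful account of how the transparent wide edge interacts with the orientations of the two cable strands --- which is also precisely where the single exceptional value $k=0$ enters. The remaining ingredients, namely the long exact sequences and the degree chase, should be a routine adaptation of the proof of Theorem~\ref{thm-linearity-general}.
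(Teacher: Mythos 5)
Your overall architecture is the right one --- run the long exact sequence of Theorem \ref{thm-exact-sequence} along the cabling family and control $\mathscr{H}(\Gamma)$ in terms of $c_\pm$ rather than in terms of the crossing number of the cabled diagram, with the $N=2$ improvement coming from the triviality of the $2$-colored strand. But the step you yourself flag as the main obstacle, the reduction of $C(\Gamma)$, is wrong as proposed, in two ways. First, the blackboard $2$-cable of a single crossing of $\mathfrak{D}$ is a four-crossing tangle on four strands, not an open $2$-braid $\mathsf{b}^{\pm2}$, so Theorem \ref{thm-simplified-chain-complex} does not apply to those blocks. Second, the correct output of the simplification is not a direct sum of grading-shifted copies of the \emph{uncolored} complex of $K$: the mechanism is to slide the single wide edge around the entire doubled diagram using the fork-sliding moves of Figure \ref{fork-sliding-fig} (from \cite{Wu-color,Wu-color-equi}) together with Proposition \ref{MOY-decomp-II}, which converts $\Gamma_K$ into the $2$-colored diagram $K^{(2)}$ with a digon and yields $H(\Gamma_K)\cong H(K^{(2)})\{q^2\}\oplus H(K^{(2)})$. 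For $N>2$ the $2$-colored homology is a genuinely different theory, not assembled from shifted copies of $H(K)$; what one extracts from it is only the homological support bound $H^l(K^{(2)})=0$ for $l\ge 2c_-+1$, which gives \eqref{eq-linearity-2cable-N}. For $N=2$ the colored strand is invisible and $H(K^{(2)})\cong\C[a]$ concentrated in homological degree $0$; your parenthetical ``it is a grading shift of $H(K)$'' contradicts your own claim that $H(\Gamma)$ sits in one homological degree --- Khovanov homology of $K$ does not --- and it is the colored-unknot computation, not anything about $H(K)$, that makes the $N=2$ case collapse.

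There is a second gap in the degree chase itself. The paper's argument does not push the contribution of $\mathscr{H}(\Gamma_K)$ away by $q$-degrees; it splits the problem into (i) injectivity of $H^{0}(D_{k-1})\{q^{2(N-1)}\}\to H^{0}(D_k)$ on the free part, which holds for \emph{all} $k$ because $\mathscr{H}_{x^{N+1}-ax}(\Gamma_K)$ is purely torsion (this uses the Gornik state basis to show $x_1-x_3$ acts invertibly on $H_{x^{N+1}-x}(\Gamma_K)$, then Lemmas \ref{lemma-graph-homology-free} and \ref{lemma-free-chain-complex}), and (ii) surjectivity, which needs $\mathscr{H}^{2k}(\Gamma_K)=0$ and is where the bounds $k\ge c_-+1$, resp.\ $k\ge1$ for $N=2$, enter; Corollary \ref{cor-H-equi-structure} then converts the isomorphism of free parts into the statement about $s_N$. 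Your proposal omits ingredient (i) entirely, and vanishing alone does not supply it: in the $N=2$ case $\mathscr{H}(\Gamma_K)$ is supported in homological degrees $0$ and $1$, so at $k=1$ the incoming group $\mathscr{H}^{2k-1}(\Gamma_K)=\mathscr{H}^{1}(\Gamma_K)$ in \eqref{long-exact-sequence} need not vanish, and without the torsion argument you cannot conclude injectivity there; your ``interference only at $k=0$'' would then fail to prove the crucial step $s_2(K_{2,3})=s_2(K_{2,1})+2$.
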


By equation \eqref{eq-linearity-2cable-2}, one can compute $s_2(K_{2,2k+1})$ for all $k$ if the values of $s_2(K_{2,\pm 1})$ are known. Unfortunately, this is not a simple problem in general. But, when $K$ is slice or amphicheiral, $s_2(K_{2,\pm 1})$ is easy to determine.

\begin{corollary}\label{cor-ras-amphicheiral}
If a knot $K$ is slice or (positively or negatively) amphicheiral, then $s_2(K_{2,2k+1}) =2k$ and $s_2(K_{2,-(2k+1)}) = - 2k$ for all $k\geq0$.
\end{corollary}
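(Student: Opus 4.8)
The plan is to deduce Corollary \ref{cor-ras-amphicheiral} from equation \eqref{eq-linearity-2cable-2} of Theorem \ref{thm-linearity-2cable}, so the only real work is computing the two base cases $s_2(K_{2,1})$ and $s_2(K_{2,-1})$ when $K$ is slice or amphicheiral. First I would observe that $K_{2,1}$ and $K_{2,-1}$ are unknots: the $(2,1)$ cable of any knot $K$ is obtained by taking the $2$-component parallel (framed) copy of $K$ and adding a single crossing, which fuses the two parallel strands into a single unknotted curve — it is isotopic to $K \# K^{r}$ connect-summed along a band, and more directly one checks that the $(2,\pm 1)$-cable pattern in the solid torus is the core circle, so $K_{2,\pm 1}$ is just $K$ itself pushed off, hence... wait — I need to be careful here. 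Actually $K_{2,\pm 1}$ is \emph{not} generally unknotted; rather, the key point is that $s_2$ is a concordance invariant and $K_{2,\pm 1}$ is concordant to a simple model knot when $K$ is slice or amphicheiral. Let me restructure.

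The cleaner route: use that $s_2$ is a concordance invariant (Rasmussen \cite{Ras1}, and more generally $s_N$ by \cite{Lobb-gornik}) together with the behavior of cabling under concordance. If $K$ is slice, then $K$ is concordant to the unknot $U$, hence $K_{2,2k+1}$ is concordant to $U_{2,2k+1}$, which is the $(2,2k+1)$ torus knot $T(2,2k+1)$. So $s_2(K_{2,2k+1}) = s_2(T(2,2k+1)) = 2k$ for $k \geq 0$ (the positive torus knot value, since $s_2 = 2g = 2k$ for $T(2,2k+1)$), and similarly $s_2(K_{2,-(2k+1)}) = s_2(T(2,-(2k+1))) = -2k$. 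If $K$ is amphicheiral, then $K$ is concordant to its mirror $\overline{K}$ (for negative amphicheiral this is concordance to the reverse mirror, which suffices since $s_2$ is insensitive to reversal), so $K_{2,2k+1}$ is concordant to $\overline{K}_{2,2k+1} = \overline{K_{2,-(2k+1)}}$; combined with $s_2(\overline{L}) = -s_2(L)$ this forces $s_2(K_{2,2k+1}) = -s_2(K_{2,-(2k+1)})$. Feeding $k=0$ into this gives $s_2(K_{2,1}) = -s_2(K_{2,-1})$. But I still need one absolute value: here note $K_{2,1}$ is actually the unknot for \emph{every} $K$ — the $(2,1)$-cabling pattern, viewed as a curve in the solid torus $\times$ itself, is isotopic to an unknotted curve bounding a disk in $S^3$ regardless of the embedding of the torus (it winds around twice but the single half-twist lets it be unknotted; concretely $K_{2,1}$ is the boundary of an obvious annulus-with-a-band, hence a slice or even trivial knot). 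Thus $s_2(K_{2,1}) = 0$, and then amphicheirality gives $s_2(K_{2,-1}) = 0$ as well.

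With the base cases in hand, I would finish by induction using \eqref{eq-linearity-2cable-2}. For $k \geq 1$ and $K$ slice or amphicheiral: $s_2(K_{2,1}) = 0$ and $s_2(K_{2,2k+1}) = s_2(K_{2,2k-1}) + 2$ for all $k \neq 0$, so by induction on $k \geq 0$ we get $s_2(K_{2,2k+1}) = 2k$. Running the recursion downward, $s_2(K_{2,-1}) = s_2(K_{2,1}) - 2 = -2$ — but that contradicts the amphicheiral computation $s_2(K_{2,-1})=0$ unless I have the recursion direction right; rewriting \eqref{eq-linearity-2cable-2} as $s_2(K_{2,2(k-1)+1}) = s_2(K_{2,2k+1}) - 2$ valid when $k \neq 0$, i.e. it relates index $2k-1$ to $2k+1$ and requires $k\neq 0$, so it does \emph{not} directly connect $s_2(K_{2,-1})$ (that is $k=0$ on the left, $k=-1$?) — I must track indices carefully: \eqref{eq-linearity-2cable-2} with $k = -m$ reads $s_2(K_{2,-2m+1}) = s_2(K_{2,-2m-1}) + 2$ for $m \neq 0$, i.e. $s_2(K_{2,-(2m+1)}) = s_2(K_{2,-(2m-1)}) - 2$; starting from $s_2(K_{2,-1}) = 0$ and inducting on $m \geq 1$ gives $s_2(K_{2,-(2k+1)}) = -2k$. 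So both families are pinned down.

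The main obstacle is establishing the base cases rigorously: specifically justifying that $K_{2,1}$ is the unknot (equivalently that $s_2(K_{2,1}) = 0$), and, for the slice case, identifying $U_{2,2k+1}$ with the torus knot $T(2,2k+1)$ and invoking the known value $s_2(T(2,2k+1)) = 2k$ (Rasmussen's original computation, $s = 2g$ for positive knots). The concordance-invariance step and the mirror-antisymmetry $s_2(\overline{L}) = -s_2(L)$ are standard and cited, and the inductive bookkeeping is routine once the base cases and index conventions are fixed; care is only needed to handle both the $k \geq 0$ and $k < 0$ recursions and to confirm that "$(2,2k+1)$ cable" here refers to the cable taken with respect to the $0$-framing (Seifert framing), which is what makes $U_{2,2k+1} = T(2,2k+1)$ and matches the crossing-count conventions of Theorem \ref{thm-linearity-2cable}.
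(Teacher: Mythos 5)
Your slice-case argument is fine and in fact a legitimate alternative to the paper's: instead of building a slice disk for $K_{2,\pm1}$ directly from a slice disk of $K$ (which is what the paper does, then feeding $s_2(K_{2,\pm 1})=0$ into equation \eqref{eq-linearity-2cable-2}), you use that cabling preserves concordance, so $K_{2,2k+1}$ is concordant to the torus knot $T(2,2k+1)$, whose $s_2$ is $2k$. That bypasses the recursion entirely for slice knots and is correct, granted the standard facts you cite.

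The amphicheiral case, however, has a genuine gap. Your argument correctly produces the antisymmetry $s_2(K_{2,1})=-s_2(K_{2,-1})$, but to extract an absolute value you assert that $K_{2,1}$ is the unknot for every $K$. This is false: the $(2,1)$ cable pattern has winding number $2$ in the solid torus, and $g(K_{2,1})=2g(K)+g(T(2,1))=2g(K)$, so $K_{2,1}$ is a nontrivial knot whenever $K$ is (e.g.\ the $(2,1)$ cable of the figure-eight knot is not the unknot, and it is not even clear that it is slice). Only for $K=U$ does the $(2,1)$ cable become the trivial torus knot $T(2,1)$. Without this step you are left knowing only that $s_2(K_{2,1})$ and $s_2(K_{2,-1})$ are negatives of each other, which does not determine them. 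The paper closes this gap differently: $K_{2,1}$ and $K_{2,-1}$ differ by changing one negative crossing to a positive one, so by \cite[Corollary 4.3]{Ras1} their $s_2$ values differ by $0$ or $2$; combined with the antisymmetry and the fact that both values are even, the only consistent possibility is $s_2(K_{2,1})=s_2(K_{2,-1})=0$. You would need to add this (or an equivalent) argument to complete the amphicheiral case.
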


\begin{figure}[ht]
\[
\xymatrix{
K \cup K' : &  \setlength{\unitlength}{1pt}
\begin{picture}(80,20)(-10,0)

\put(0,0){\vector(1,0){60}}

\put(0,20){\vector(1,0){60}}

\end{picture} \\
K_{2,1}: & \setlength{\unitlength}{1pt}
\begin{picture}(80,20)(-10,-20)

\put(0,0){\line(1,0){20}}

\put(0,-20){\line(1,0){20}}

\qbezier(20,0)(25,0)(30,-10)
\qbezier(30,-10)(35,-20)(40,-20)

\qbezier(20,-20)(25,-20),(29,-12)
\qbezier(31,-8)(35,0),(40,0)

\put(40,0){\vector(1,0){20}}

\put(40,-20){\vector(1,0){20}}

\end{picture} \\ 
K_{2,-1}: & \setlength{\unitlength}{1pt}
\begin{picture}(80,20)(-10,0)

\put(0,0){\line(1,0){20}}

\put(0,20){\line(1,0){20}}

\qbezier(20,0)(25,0)(30,10)
\qbezier(30,10)(35,20)(40,20)

\qbezier(20,20)(25,20),(29,12)
\qbezier(31,8)(35,0),(40,0)

\put(40,0){\vector(1,0){20}}

\put(40,20){\vector(1,0){20}}

\end{picture}
}
\]

\caption{Local diagrams of $K \cup K'$, $K_{2,1}$ and $K_{2,-1}$ }\label{slice-cable-fig}

\end{figure}
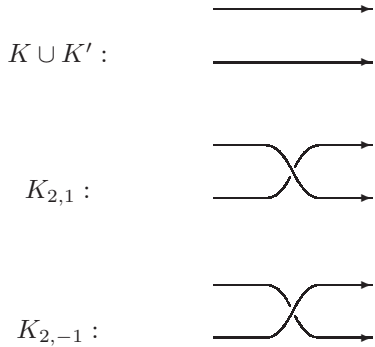

\begin{proof}
Assume $K$ is slice. Let $\Delta$ be a slice disk of $K$ properly smoothly embedded in $B^4$ and $\vec{v}$ a non-vanishing vector field on $\Delta$ that is everywhere transverse to $\Delta$ and tangent to $S^3$ along $K$. Of course, $\vec{v}|_K$ gives the Seifert framing of $K$. Push $\Delta$ slightly in the direction of $\vec{v}$. This gives a properly smoothly embedded disk $\Delta'$ in $B^4$ parallel to $\Delta$, whose boundary is a push-off $K'$ of $K$ along its Seifert framing. As in Figure \ref{slice-cable-fig}, we insert a positive (resp. negative) crossing in a pair of parallel arcs in $K$ and $K'$ by gluing a twisted band between $\Delta$ and $\Delta'$. This changes the link $K \cup K'$ into $K_{2,1}$ (resp. $K_{2,-1}$) and combines $\Delta$ and $\Delta'$ into a slice disk of $K_{2,1}$ (resp. $K_{2,-1}$). So $K_{2,1}$ and $K_{2,-1}$ are both slice. By \cite[Theorem 1]{Ras1}, it follows that $s_2(K_{2,1}) =s_2(K_{2,-1})=0$, which, by equation \eqref{eq-linearity-2cable-2}, implies the corollary for slice knots. 

Assume $K$ is (positively or negatively) amphicheiral. Denote by $\overline{K}$ the mirror image of $K$ and $-K$ the inverse of $K$. Then $K = \pm \overline{K}$. Thus, $K_{2,1} = (\pm \overline{K})_{2,1} = \pm \overline{K_{2,-1}}$. By \cite[Theorem 2]{Ras1}, this implies $s_2(K_{2,1}) = -s_2(K_{2,-1})$. But $s_2(K_{2,1})$ and $s_2(K_{2,-1})$ are both even integers and,  by \cite[Corollary 4.3]{Ras1}, $s_2(K_{2,1}) -s_2(K_{2,-1})=0 \text{ or } 2$. For all these to be simultaneously true, we must have $s_2(K_{2,1}) =s_2(K_{2,-1})=0$, which, by equation \eqref{eq-linearity-2cable-2}, implies the corollary for amphicheiral knots.
\end{proof}

Since the $\mathfrak{sl}(N)$ Rasmussen invariants share many properties, one may conjecture that these are just simple linear re-normalizations of the original $\mathfrak{sl}(2)$ Rasmussen invariant. (See, for example, \cite[Conjecture 1.6]{Lobb-gornik}.) In the proof of Theorem \ref{thm-linearity-2cable} in Section \ref{sec-ras}, we will see that equation \eqref{eq-linearity-2cable-2} is established using the fact that the $2$-colored $\mathfrak{sl}(2)$ Khovanov-Rozansky homology is trivial. The $2$-colored $\mathfrak{sl}(N)$ Khovanov-Rozansky homology is non-trivial for $N>2$. So it seems unlikely that one can improve equation \eqref{eq-linearity-2cable-N} to be as strong as equation \eqref{eq-linearity-2cable-2}. 

\begin{question}\label{que-independence-s_N}
Does this mean that one can disprove \cite[Conjecture 1.6]{Lobb-gornik} by computing the $\mathfrak{sl}(N)$ Rasmussen invariants of $(2,2k+1)$ cable knots?
\end{question}

\subsection{Organization of this paper} In Section \ref{sec-simplify}, we simplify the $\mathfrak{sl}(N)$ Khovanov-Rozansky chain complexes of open $2$-braids and prove Theorem \ref{thm-simplified-chain-complex}. Then we use Theorem \ref{thm-exact-sequence} to prove Theorems \ref{thm-linearity-general} and \ref{thm-linearity-2cable} in Section \ref{sec-ras}. 

We assume the reader is familiar with the Khovanov-Rozansky homology defined in \cite{KR1}. Some knowledge of \cite{Gornik,Krasner,Lobb-gornik,Lobb-2-twists,Wu7,Wu-color,Wu-color-equi} would also help.

\begin{acknowledgments}
The author would like to thank the referee for providing many helpful suggestions.
\end{acknowledgments}

\section{Simplifying the Chain Complexes}\label{sec-simplify}

In this section, we simplify the $\mathfrak{sl}(N)$ Khovanov-Rozansky chain complexes of $\mathsf{b}^{\pm 2k}$ and prove Theorem \ref{thm-simplified-chain-complex}. We will do so by repeatedly applying the second MOY decomposition. To do this, we need explicit descriptions of the inclusion and projection maps. 

\begin{figure}[ht]
\[
\xymatrix{
\Gamma: & \setlength{\unitlength}{1pt}
\begin{picture}(80,20)(-10,0)

\put(-10,20){\small{$x_3$}}

\put(-10,0){\small{$x_4$}}

\put(0,0){\vector(1,1){10}}

\put(0,20){\vector(1,-1){10}}

\qbezier(20,10)(30,30)(40,10)

\qbezier(20,10)(30,-10)(40,10)

\put(40,10){\vector(1,-2){0}}

\put(40,10){\vector(1,2){0}}

\put(30,19){\line(0,1){2}}

\put(30,-1){\line(0,1){2}}

\put(25,22){\small{$x_5$}}

\put(25,-6){\small{$x_6$}}

\put(50,10){\vector(1,1){10}}

\put(50,10){\vector(1,-1){10}}

\put(65,20){\small{$x_1$}}

\put(65,0){\small{$x_2$}}

\linethickness{3pt}

\put(10,10){\line(1,0){10}}

\put(40,10){\line(1,0){10}}

\end{picture} &&& \Gamma': &   \input{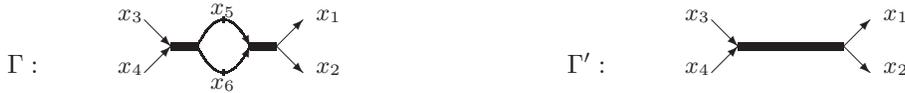}
}
\]

\caption{MOY graphs $\Gamma$ and $\Gamma'$}\label{MOY-decomp-II-fig}

\end{figure}

\subsection{The second MOY decomposition} We start by reviewing the second MOY decomposition. Let $\Gamma$ and $\Gamma'$ be the MOY graphs in Figure \ref{MOY-decomp-II-fig}. Khovanov and Rozansky proved in \cite{KR1} the following homotopy equivalence, which we call the second MOY decomposition. 

\begin{proposition}\cite[Proposition 30]{KR1}\label{MOY-decomp-II}
\[
C(\Gamma) \simeq C(\Gamma')\{q\} \oplus C(\Gamma')\{q^{-1}\}.
\]
\end{proposition}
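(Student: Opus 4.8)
This statement is \cite[Proposition~30]{KR1}; I describe the shape of the argument, which is a direct computation with Koszul matrix factorizations and goes through verbatim for all three potentials. Label the two thin edges of the bigon in $\Gamma$ by $x_5$ (upper) and $x_6$ (lower), write $R_0=\mathbb{Q}[x_1,x_2,x_3,x_4]$, and let $W\in R_0$ be the potential associated to the four boundary legs. Recall that the wide edge with incoming legs $x_3,x_4$ and outgoing legs $x_1,x_2$ is the Koszul matrix factorization $C(\Gamma')=\bigl\{(\gamma_1,\,x_1+x_2-x_3-x_4),\,(\gamma_2,\,x_1x_2-x_3x_4)\bigr\}$ over $R_0$, where $\gamma_1,\gamma_2$ are the usual divided-difference entries cut out by $W$, and that in the normalization of \cite{KR1} this factorization carries an intrinsic $\{q^{-1}\}$ shift. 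By definition $C(\Gamma)$ is the tensor product over $\mathbb{Q}[x_5,x_6]$ of the wide-edge factorizations $C(x_3,x_4\to x_5,x_6)$ and $C(x_5,x_6\to x_1,x_2)$, so it is an iterated Koszul matrix factorization over $\mathbb{Q}[x_1,\dots,x_6]$ with four rows, whose right-hand entries are $x_5+x_6-x_3-x_4$, $x_5x_6-x_3x_4$, $x_1+x_2-x_5-x_6$ and $x_1x_2-x_5x_6$, and which carries an intrinsic $\{q^{-2}\}$ shift.

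The plan has four steps. \emph{(i)} Apply elementary row operations to this Koszul matrix — these are isomorphisms in the homotopy category — to replace the third and fourth right-hand entries by $x_1+x_2-x_3-x_4$ and $x_1x_2-x_3x_4$, so that $x_5,x_6$ survive only in the first two rows. \emph{(ii)} Since $x_6$ has unit coefficient in the first right-hand entry, invoke the standard lemma on excluding a variable from a Koszul matrix factorization to delete that row and substitute $x_6=x_3+x_4-x_5$ everywhere; the result is a Koszul matrix factorization over $R_0[x_5]$ with three rows whose right-hand entries are $u:=(x_5-x_3)(x_5-x_4)$, $x_1+x_2-x_3-x_4$ and $x_1x_2-x_3x_4$, and in which every remaining occurrence of $x_5$ sits inside a polynomial in $u$. \emph{(iii)} Using the usual Koszul manipulations, reduce the left-hand entries of the two pure rows modulo $u$ so that these two rows become precisely the rows of $C(\Gamma')$; since the potential of the reduced complex is still $W\in R_0$, hence free of $x_5$, and the two $C(\Gamma')$-rows already contribute all of $W$, the left-hand entry of the surviving mixed row is forced to vanish. \emph{(iv)} Conclude that, up to homotopy, $C(\Gamma)$ is the tensor product of the Koszul factor $\{(0,u)\}$ with $C(\Gamma')$; as a $\zed_2$-graded complex of $R_0$-modules, $\{(0,u)\}$ is $R_0[x_5]\xrightarrow{u}R_0[x_5]$ with the other differential zero, and since $u$ is a nonzerodivisor and $R_0[x_5]/(u)=R_0\langle 1,x_5\rangle$ is free over $R_0$, this factor splits off a contractible summand and is homotopy equivalent over $R_0$ to $R_0\langle 1,x_5\rangle$ placed in a single $\zed_2$-degree with zero differential. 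Hence $C(\Gamma)\simeq C(\Gamma')\otimes_{R_0}R_0\langle 1,x_5\rangle$, i.e.\ two copies of $C(\Gamma')$ whose internal degrees differ by $\deg x_5=2$; combining this with the intrinsic $\{q^{-2}\}$ shift of $C(\Gamma)$ and the $\{q^{-1}\}$ shift of $C(\Gamma')$ yields exactly $C(\Gamma')\{q\}\oplus C(\Gamma')\{q^{-1}\}$.

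The only step that is not routine bookkeeping is \emph{(iii)}: after excising $x_6$ one must check that all of the remaining dependence on $x_5$ genuinely collapses into the single Koszul entry $u$ and that the two pure rows then coincide with those of $C(\Gamma')$. That verification, together with the careful accounting of $q$-degree shifts through the excision in \emph{(ii)}, is exactly the content of the computation in \cite{KR1}, to which I refer for the details.
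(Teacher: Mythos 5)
The paper offers no proof of this proposition at all---it is quoted verbatim from \cite[Proposition 30]{KR1}---and your outline is a faithful sketch of the Koszul-matrix-factorization computation carried out there (row operations on the four-row factorization, exclusion of the internal variables, and splitting off the rank-two free factor $R_0\langle 1,x_5\rangle$ coming from $\{(0,(x_5-x_3)(x_5-x_4))\}$), with the final $q$-degree bookkeeping coming out correctly to $\{q\}\oplus\{q^{-1}\}$. So your argument is essentially the same as the one the paper relies on by citation.
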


Next, we explicitly construct the inclusion and projection maps in this decomposition. Let us first recall some basic homomorphisms of matrix factorizations.

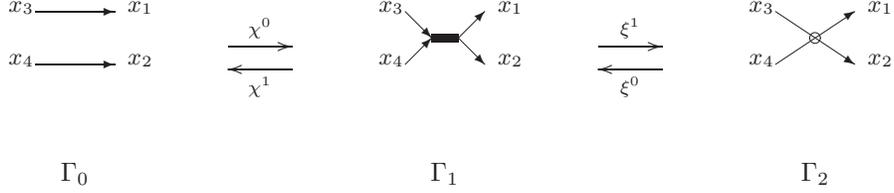
\begin{figure}[ht]
\[
\xymatrix{
\setlength{\unitlength}{1pt}
\begin{picture}(50,20)(-10,0)

\put(-10,20){\small{$x_3$}}

\put(-10,0){\small{$x_4$}}

\put(0,0){\vector(1,0){30}}

\put(0,20){\vector(1,0){30}}

\put(35,20){\small{$x_1$}}

\put(35,0){\small{$x_2$}}

\end{picture} &\ar@<1ex>[r]^{\chi^0} & \ar@<1ex>[l]^{\chi^1} & \setlength{\unitlength}{1pt}
\begin{picture}(50,20)(-10,0)

\put(-10,20){\small{$x_3$}}

\put(-10,0){\small{$x_4$}}

\put(0,0){\vector(1,1){10}}

\put(0,20){\vector(1,-1){10}}

\put(20,10){\vector(1,1){10}}

\put(20,10){\vector(1,-1){10}}

\put(35,20){\small{$x_1$}}

\put(35,0){\small{$x_2$}}

\linethickness{3pt}

\put(10,10){\line(1,0){10}}

\end{picture} & \ar@<1ex>[r]^{\xi^1} & \ar@<1ex>[l]^{\xi^0} & \setlength{\unitlength}{1pt}
\begin{picture}(50,20)(-10,0)

\put(-10,20){\small{$x_3$}}

\put(-10,0){\small{$x_4$}}

\put(0,0){\vector(3,2){30}}

\put(0,20){\vector(3,-2){30}}

\put(15,10){\circle{4}}

\put(35,20){\small{$x_1$}}

\put(35,0){\small{$x_2$}}

\end{picture} \\
\Gamma_0 &&& \Gamma_1 &&& \Gamma_2
}
\]
\caption{Homomorphisms $\chi^0$, $\chi^1$, $\xi^0$ and $\xi^1$}\label{chi-xi-fig}

\end{figure}

Let $\Gamma_0$, $\Gamma_1$ and $\Gamma_2$ be the MOY graphs in Figure \ref{chi-xi-fig}, where the circled crossing in $\Gamma_2$ is a virtual crossing as introduced in \cite{Gornik}. That is, $C(\Gamma_2)$ is the tensor product over the base ring of the matrix factorizations of the two arcs in $\Gamma_2$. Khovanov and Rozansky \cite{KR1} defined the homomorphisms $\chi^0$ and $\chi^0$ between $C(\Gamma_0)$ and $C(\Gamma_1)$. Gornik \cite{Gornik} defined the homomorphisms $\xi^1$ and $\xi^0$ between $C(\Gamma_1)$ and $C(\Gamma_2)$. The following are basic properties of these homomorphisms.
\begin{itemize}
	\item $C(\Gamma_0) \xrightarrow{\chi^0} C(\Gamma_1)$, $C(\Gamma_1) \xrightarrow{\chi^1} C(\Gamma_0)$, $C(\Gamma_1) \xrightarrow{\xi^1} C(\Gamma_2)$ and $C(\Gamma_2) \xrightarrow{\xi^0} C(\Gamma_1)$ are $\C[x_1,x_2,x_3,x_4]$-linear homogeneous homomorphisms of $\zed_2$-degree $0$ and $q$-degree $1$.
	\item 
	\begin{eqnarray}
	\label{eq-chi-comp} \chi^1 \circ \chi^0 = (x_1-x_4) \id_{C(\Gamma_0)}, & \chi^0 \circ \chi^1 = (x_1-x_4)\id_{C(\Gamma_1)}, \\
	\label{eq-xi-comp} \xi^1 \circ \xi^0 = (x_1-x_3) \id_{C(\Gamma_2)}, & \xi^0 \circ \xi^1 = (x_1-x_3) \id_{C(\Gamma_1)}.
	\end{eqnarray}
	\item Up to homotopy and scaling, $\chi^0$, $\chi^1$, $\xi^0$ and $\xi^1$ are the unique homotopically non-trivial $\C[x_1,x_2,x_3,x_4]$-linear homomorphisms of $q$-degree $1$ between these matrix factorizations.
\end{itemize}

\begin{lemma}\label{lemma-chi-xi-compose=0}
	\begin{eqnarray*}
	\xi^1 \circ \chi^0 & \simeq & 0, \\
	\chi^1 \circ \xi^0 & \simeq & 0.
	\end{eqnarray*}
\end{lemma}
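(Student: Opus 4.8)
The plan is to prove both homotopies by a direct computation with the explicit Koszul matrix factorization models of $\chi^0,\chi^1,\xi^0,\xi^1$ recorded above. In those models $C(\Gamma_0)$ is the Koszul factorization whose ``$b$-entries'' are $x_1-x_3$ and $x_2-x_4$, $C(\Gamma_2)$ is the Koszul factorization whose $b$-entries are $x_1-x_4$ and $x_2-x_3$, and $C(\Gamma_1)$ is the one whose $b$-entries are $x_1+x_2-x_3-x_4$ and $x_1x_2-x_3x_4$; the map $\chi^0$ is the morphism induced by the change of basis writing $(x_1+x_2-x_3-x_4,\ x_1x_2-x_3x_4)$ in terms of $(x_1-x_3,\ x_2-x_4)$ (this is what produces $\chi^1\circ\chi^0=(x_1-x_4)\,\id$), and $\xi^1$ is induced by the corresponding adjugate change of basis relating $(x_1+x_2-x_3-x_4,\ x_1x_2-x_3x_4)$ and $(x_1-x_4,\ x_2-x_3)$, together with the matching changes on the ``$a$-entries'' $\pi_{13},\pi_{24},\pi_{14},\pi_{23}$.

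The main step is to compose these and observe that routing through the wide edge $C(\Gamma_1)$ forces the two $b$-pairs $\{x_1-x_3,\ x_2-x_4\}$ and $\{x_1-x_4,\ x_2-x_3\}$ to mix, so that every component of the resulting morphism $\xi^1\circ\chi^0\colon C(\Gamma_0)\to C(\Gamma_2)$ lies in the ideal
\[
(x_1-x_3,\ x_2-x_4)+(x_1-x_4,\ x_2-x_3)=(x_1-x_3,\ x_1-x_4,\ x_2-x_4)
\]
together with the $\pi_{ij}$ (concretely, the $b$-side change of basis comes out upper triangular with diagonal $x_2-x_4$ and $x_2-x_3$ up to sign and off-diagonal $x_4-x_3=(x_1-x_3)-(x_1-x_4)$, and the $a$-side contributes only the Koszul entries $\pi_{13},\pi_{24},\pi_{14},\pi_{23}$). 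Multiplication by $x_1-x_3,\ x_2-x_4,\ \pi_{13},\ \pi_{24}$ is null-homotopic on $C(\Gamma_0)$, and multiplication by $x_1-x_4,\ x_2-x_3,\ \pi_{14},\ \pi_{23}$ is null-homotopic on $C(\Gamma_2)$ --- the standard fact about Koszul entries --- so I would split $\xi^1\circ\chi^0$ as $\alpha+\beta$ with $\alpha$ built from the source's Koszul entries and $\beta$ from the target's, factor each through the appropriate null-homotopic endomorphism (using that $\chi^0,\xi^1$ are chain maps, in order to transport the Koszul null-homotopies through them), and conclude $\xi^1\circ\chi^0\simeq0$. The identity $\chi^1\circ\xi^0\simeq0$ then follows from the analogous computation, with the roles of $C(\Gamma_0)$ and $C(\Gamma_2)$ interchanged (the composition now passes through $C(\Gamma_1)$ via $\xi^0$ and then $\chi^1$).

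The step I expect to demand real care --- and the only content beyond routine bookkeeping --- is upgrading the ideal-theoretic observation to an honest null-homotopy. A morphism of matrix factorizations is not literally multiplication by a single matrix (there is the paired change on the $a$-entries and the $2\times2$ block linking the even and odd parts), so the splitting $\xi^1\circ\chi^0=\alpha+\beta$ and the compatibility of the two partial homotopies must be checked on the genuine chain maps, not merely on the change-of-basis data; in practice this means writing the homotopy down explicitly. A superficially cleaner alternative --- identifying the homotopy classes of $\zed_2$-degree-$0$, $q$-degree-$2$ maps $C(\Gamma_0)\to C(\Gamma_2)$ with a graded piece of the $\mathfrak{sl}(N)$ homology of the circle obtained by capping $\Gamma_0$ (orientation reversed) off against $\Gamma_2$, and noting that no generator of it lies in $q$-degree $2$ --- is no cheaper, since one must still pin down the relevant grading shift, and it only works if that graded piece genuinely vanishes; so I would keep the explicit computation as the primary route.
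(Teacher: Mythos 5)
Your computational plan is not the paper's route, and as written it stops short of a proof at exactly the point you flag. The observation that every matrix entry of $\xi^1\circ\chi^0$ lies in the ideal generated by the Koszul entries of the source and of the target does not by itself yield a null-homotopy: what is needed is a decomposition $\xi^1\circ\chi^0=\alpha+\beta$ in which $\alpha$ and $\beta$ are each honest morphisms of matrix factorizations, with $\alpha$ factoring as (chain map)$\,\circ\,$(multiplication by a Koszul entry of $C(\Gamma_0)$) and $\beta$ factoring through multiplication by a Koszul entry of $C(\Gamma_2)$. Such a decomposition exists (the statement is true), but producing it is the entire content of the lemma, and your proposal defers it to ``writing the homotopy down explicitly.'' The two summands you extract from the change-of-basis data need not commute with the differentials individually even when their sum does, so the transport of the Koszul null-homotopies through $\chi^0$ and $\xi^1$ cannot be taken for granted; until the homotopy is actually exhibited, this is a plausible strategy rather than a proof. (A further small caution: the composite of the two changes of basis is not itself a change of basis over $\C[x_1,\dots,x_4]$, since $x_1-x_3$ does not lie in the ideal $(x_1-x_4,\ x_2-x_3)$, so the ``upper triangular'' description has to be interpreted at the level of the morphisms, not of the $b$-entries.)

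The ``superficially cleaner alternative'' you set aside is essentially the paper's proof, and it is cheaper than you fear: no $q$-degree bookkeeping is required. Capping $\Gamma_0$ off against $\Gamma_2$ yields a single circle, so the relevant Hom space is concentrated in a single $\zed_2$-degree; by \cite[Lemma 7.33]{Wu-color} and \cite[Lemma 4.15]{Wu-color-equi}, every homotopically non-trivial homomorphism between $C(\Gamma_0)$ and $C(\Gamma_2)$, in either direction, has $\zed_2$-degree $1$. Since $\chi^0,\chi^1,\xi^0,\xi^1$ all have $\zed_2$-degree $0$, so do the compositions $\xi^1\circ\chi^0$ and $\chi^1\circ\xi^0$, and they are therefore null-homotopic. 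This parity argument disposes of both identities at once, with no explicit homotopy and no grading-shift computation, and is the route you should take unless you have an independent reason to want the homotopy in closed form.
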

\begin{proof}
By \cite[Lemma 7.33]{Wu-color} and \cite[Lemma 4.15]{Wu-color-equi}, all homotopically non-trivial homomorphisms from $C(\Gamma_0)$ to $C(\Gamma_2)$ or from $C(\Gamma_2)$ to $C(\Gamma_0)$ have $\zed_2$-degree $1$. But both $\xi^1 \circ \chi^0$ and $\chi^1 \circ \xi^0$ have $\zed_2$-degree $0$. So they are both homotopic to $0$.
\end{proof}

\begin{figure}[ht]
\[
\xymatrix{
\Gamma'': & \setlength{\unitlength}{1pt}
\begin{picture}(80,20)(-10,0)

\put(-10,20){\small{$x_3$}}

\put(-10,0){\small{$x_4$}}

\put(0,0){\vector(1,1){10}}

\put(0,20){\vector(1,-1){10}}

\qbezier(20,10)(35,25)(50,10)

\qbezier(20,10)(35,-5)(50,10)

\put(50,10){\circle{4}}

\put(35,16.5){\line(0,1){2}}

\put(35,1.5){\line(0,1){2}}

\put(30,22){\small{$x_5$}}

\put(30,-6){\small{$x_6$}}

\put(50,10){\vector(1,1){10}}

\put(50,10){\vector(1,-1){10}}

\put(65,20){\small{$x_1$}}

\put(65,0){\small{$x_2$}}

\linethickness{3pt}

\put(10,10){\line(1,0){10}}

\end{picture}
}
\]

\caption{$\Gamma''$}\label{trick-fig}

\end{figure}

Let $\Gamma$ and $\Gamma'$ be as in Figure \ref{MOY-decomp-II-fig} and $\Gamma''$ be as in Figure \ref{trick-fig}. Then we get the homomorphisms $C(\Gamma) \xrightarrow{\chi_r^1} C(\Gamma')$, $C(\Gamma') \xrightarrow{\chi_r^0} C(\Gamma)$, $C(\Gamma) \xrightarrow{\xi_r^1} C(\Gamma'')$ and $C(\Gamma'') \xrightarrow{\xi_r^0} C(\Gamma)$, where the lower index ``$r$" means that these homomorphisms are associated to the right wide edge in $\Gamma$. There is an obvious homotopy equivalence $C(\Gamma'') \simeq C(\Gamma')$, which we use to identify $C(\Gamma'')$ and $C(\Gamma')$. This allows us to view $\xi^0$ and $\xi^1$ as homomorphisms between $C(\Gamma)$ and $C(\Gamma')$.

By \cite[Lemma 7.9]{Wu-color} and \cite[Lemma 4.8]{Wu-color-equi}, up to homotopy and scaling, there are a unique homogeneous $\C[x_1,x_2,x_3,x_4]$-linear homomorphism $J:C(\Gamma'') (\simeq C(\Gamma')) \rightarrow C(\Gamma)$ of $q$-degree $-1$ and a unique homogeneous $\C[x_1,x_2,x_3,x_4]$-linear homomorphism $P:C(\Gamma) \rightarrow C(\Gamma'') (\simeq C(\Gamma'))$ of $q$-degree $-1$. By \cite[Lemma 7.11]{Wu-color} and \cite[Lemma 4.10]{Wu-color-equi}, after possibly scaling $J$ and $P$, we have
\begin{eqnarray}
\label{comp-J-P-x-5} P \circ \mathtt{m}(x_5) \circ J & \simeq & \id_{C(\Gamma')}, \\
\label{comp-J-P-x-6} P \circ \mathtt{m}(x_6) \circ J & \simeq & -\id_{C(\Gamma')}, \\
\label{comp-J-P-1} P \circ J & \simeq & 0,
\end{eqnarray}
where $\mathtt{m}(\bullet): C(\Gamma) \rightarrow C(\Gamma)$ is the homomorphism given by the multiplication of $\bullet$.

\begin{lemma}\label{lemma-J-P-chi-xi-compose}
Up to scaling by non-zero scalars,
\[
P\circ \chi_r^0 \simeq \chi_r^1 \circ J \simeq P\circ \xi_r^0 \simeq \xi_r^1 \circ J \simeq \id_{C(\Gamma')}.
\]
\end{lemma}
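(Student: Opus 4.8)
The plan is to deduce Lemma \ref{lemma-J-P-chi-xi-compose} from the uniqueness statements already established. Recall that $J: C(\Gamma') \to C(\Gamma)$ has $q$-degree $-1$ and each of $\chi_r^1, \xi_r^1: C(\Gamma) \to C(\Gamma')$ has $q$-degree $1$; hence $\chi_r^1 \circ J$ and $\xi_r^1 \circ J$ are homogeneous $\C[x_1,x_2,x_3,x_4]$-linear endomorphisms of $C(\Gamma')$ of $q$-degree $0$ and $\zed_2$-degree $0$. Similarly $P \circ \chi_r^0$ and $P \circ \xi_r^0$ are such endomorphisms. The key input is that, up to homotopy and scaling, the only homotopically non-trivial endomorphism of $C(\Gamma')$ in this degree is the identity (this follows from the same classification lemmas of \cite{Wu-color,Wu-color-equi} cited above, since $C(\Gamma')$ is, up to grading shift, the matrix factorization of a pair of disjoint arcs). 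So it suffices to show that each of the four composites is \emph{not} homotopic to $0$; then each must be a non-zero scalar multiple of $\id_{C(\Gamma')}$, and after rescaling $J$ and $P$ we get the asserted equalities up to scalars.

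The first step is therefore to record the degree bookkeeping above and invoke the classification to reduce the lemma to four non-vanishing statements. The second step is to prove non-vanishing. Here I would use the relations already in hand. For instance, $(\xi_r^1 \circ J) \circ (P \circ \xi_r^0)$; using \eqref{comp-J-P-x-5}, \eqref{comp-J-P-x-6}, \eqref{comp-J-P-1} together with \eqref{eq-xi-comp} and the fact that $\xi_r^0, \xi_r^1$ absorb multiplication by $x_5, x_6$ in a controlled way (these are the edge variables of the wide edge whose endpoints are $x_5, x_6$, so $\mathtt{m}(x_5)$ and $\mathtt{m}(x_6)$ act on $C(\Gamma)$ in the way described just before \eqref{comp-J-P-x-5}), one can rewrite $\xi_r^0 \circ \xi_r^1$ in terms of $\mathtt{m}(x_1 - x_3)$ and then insert it into a composite that \eqref{comp-J-P-x-5}–\eqref{comp-J-P-1} force to be homotopic to a non-zero multiple of $\id_{C(\Gamma')}$ (or of $\mathtt{m}(x_1-x_3)$, which is also homotopically non-trivial on $C(\Gamma')$). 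An identical argument with $\chi_r^0, \chi_r^1$ and \eqref{eq-chi-comp} handles the $\chi$-composites. If a composite were homotopic to $0$, then so would be this larger composite, contradicting the non-vanishing of $\id_{C(\Gamma')}$ (or of $\mathtt{m}(x_1-x_3)$) up to homotopy. The third step is to observe that, having shown all four composites are non-zero scalar multiples of $\id_{C(\Gamma')}$, one may rescale $J$ so that $\chi_r^1 \circ J \simeq \id_{C(\Gamma')}$, rescale $P$ so that $P \circ \chi_r^0 \simeq \id_{C(\Gamma')}$, and then note that the remaining two composites, being non-zero scalar multiples of the identity, agree with the identity up to scalars as claimed.

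I expect the main obstacle to be the second step: choosing the right auxiliary composite so that the relations \eqref{eq-chi-comp}, \eqref{eq-xi-comp}, \eqref{comp-J-P-x-5}, \eqref{comp-J-P-x-6}, \eqref{comp-J-P-1} pin the product down to something manifestly homotopically non-trivial, while keeping careful track of which variable multiplications one is allowed to slide past $\chi_r^0, \chi_r^1, \xi_r^0, \xi_r^1, J, P$. One subtlety worth flagging: $C(\Gamma')$ is not literally $C(\Gamma'')$, but the ``obvious homotopy equivalence'' $C(\Gamma'') \simeq C(\Gamma')$ fixed earlier must be used consistently when transporting $J$ and $P$ and when comparing with $\chi_r^{0,1}, \xi_r^{0,1}$; I would state explicitly that all identifications are made via that fixed equivalence. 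A cleaner alternative to the ``larger composite'' trick, if it works out, is to simply compute each of $\chi_r^1 \circ J$, $\xi_r^1 \circ J$, etc., directly on an explicit generator of the relevant Koszul matrix factorization and see that the output is a unit times that generator; but the uniqueness-plus-nonvanishing route above avoids all explicit matrix computations and is the one I would pursue first.
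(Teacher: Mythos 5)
Your proposal follows the paper's proof: reduce to non-vanishing using the fact that the space of homotopy classes of $q$-degree-$0$ endomorphisms of $C(\Gamma')$ is one-dimensional and spanned by $\id_{C(\Gamma')}$, then detect non-vanishing of each factor by evaluating a four-fold composite via \eqref{eq-chi-comp}, \eqref{eq-xi-comp} and \eqref{comp-J-P-x-5}--\eqref{comp-J-P-1}. The one detail to repair in your second step: the composite must be bracketed as $P\circ \chi_r^0\circ \chi_r^1\circ J = P\circ \mathtt{m}(x_1-x_6)\circ J \simeq \id_{C(\Gamma')}$ (and $P\circ \xi_r^0\circ \xi_r^1\circ J = P\circ \mathtt{m}(x_1-x_5)\circ J \simeq -\id_{C(\Gamma')}$), since the relations in hand control $P\circ J$, $P\circ\mathtt{m}(x_5)\circ J$ and $P\circ\mathtt{m}(x_6)\circ J$ but say nothing about $J\circ P$, so the order $(\xi_r^1\circ J)\circ(P\circ \xi_r^0)$ you first write down is not directly computable; moreover the linear factors produced by \eqref{eq-chi-comp} and \eqref{eq-xi-comp} at the right wide edge are $x_1-x_6$ and $x_1-x_5$ (the internal variables), not $x_1-x_3$ --- this matters, because $P\circ\mathtt{m}(x_1-x_3)\circ J=\mathtt{m}(x_1-x_3)\circ P\circ J\simeq 0$ and would prove nothing.
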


\begin{proof}
It is easy to check that the space of homotopy classes of homomorphisms from $C(\Gamma')$ to $C(\Gamma')$ of $q$-degree $0$ is $1$-dimensional and spanned by $\id_{C(\Gamma')}$. (See for example \cite[Proof of Lemma 4.4]{Wu7}.) So, to prove the lemma, we only need to show that these compositions are not homotopic to $0$. Using equations \eqref{eq-chi-comp}, \eqref{comp-J-P-x-6} and \eqref{comp-J-P-1}, we have
\[
P\circ \chi_r^0 \circ \chi_r^1 \circ J = P\circ \mathtt{m}(x_1-x_6) \circ J = \mathtt{m}(x_1) \circ P \circ J - P\circ \mathtt{m}(x_6) \circ J \simeq \id_{C(\Gamma')}.
\]
So $P\circ \chi_r^0$ and $\chi_r^1 \circ J$ are not homotopic to $0$. Using equations \eqref{eq-xi-comp}, \eqref{comp-J-P-x-5}, and \eqref{comp-J-P-1}, one can similarly show that $P\circ \xi_r^0$ and $\xi_r^1 \circ J$ are not homotopic to 0.
\end{proof}

\begin{lemma}\label{lemma-homotopy-inverses}
The homomorphisms
\begin{equation}\label{homotopy-inverses-1}
\xymatrix{
{\left.%
\begin{array}{c}
  C(\Gamma')\{q\} \\
  \oplus \\
  C(\Gamma')\{q^{-1}\} 
\end{array}%
\right.} & \ar@<1ex>[r]^{(\chi_r^0, J)}
&\ar@<1ex>[l]^{\left(%
\begin{array}{c}
  P \\
  \xi_r^1 \\
\end{array}%
\right)}
&
C(\Gamma) 
}
\end{equation}
are homotopy equivalences and are homotopy inverses of each other.

Similarly, the homomorphisms
\begin{equation}\label{homotopy-inverses-2}
\xymatrix{
{\left.%
\begin{array}{c}
  C(\Gamma')\{q\} \\
  \oplus \\
  C(\Gamma')\{q^{-1}\} 
\end{array}%
\right.} & \ar@<1ex>[r]^{(\xi_r^0, J)}
&\ar@<1ex>[l]^{\left(%
\begin{array}{c}
  P \\
  \chi_r^1 \\
\end{array}%
\right)}
&
C(\Gamma) 
}
\end{equation}
are homotopy equivalences and are homotopy inverses of each other.
\end{lemma}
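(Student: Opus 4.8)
The plan is to verify directly that the two composites in each direction are homotopic to the identity, using the basic homomorphism identities collected above. I will treat the first pair \eqref{homotopy-inverses-1}; the second pair \eqref{homotopy-inverses-2} is handled by the same argument with the roles of $\chi_r$ and $\xi_r$ interchanged and with \eqref{comp-J-P-x-5} used in place of \eqref{comp-J-P-x-6} (and vice versa). Since Proposition \ref{MOY-decomp-II} already tells us that $C(\Gamma) \simeq C(\Gamma')\{q\}\oplus C(\Gamma')\{q^{-1}\}$, it suffices to exhibit one homotopy equivalence realizing this decomposition; checking that the stated maps are mutually inverse up to homotopy then pins down that they are the maps in question (after the non-zero rescalings permitted by Lemmas \ref{lemma-J-P-chi-xi-compose} and by \eqref{comp-J-P-x-5}--\eqref{comp-J-P-1}).

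First I would compute the composite $\left(\begin{smallmatrix}P\\ \xi_r^1\end{smallmatrix}\right)\circ(\chi_r^0, J): C(\Gamma')\{q\}\oplus C(\Gamma')\{q^{-1}\}\to C(\Gamma')\{q\}\oplus C(\Gamma')\{q^{-1}\}$. This is the $2\times 2$ matrix whose entries are $P\circ\chi_r^0$, $P\circ J$, $\xi_r^1\circ\chi_r^0$, $\xi_r^1\circ J$. By Lemma \ref{lemma-J-P-chi-xi-compose}, the diagonal entries $P\circ\chi_r^0$ and $\xi_r^1\circ J$ are both $\simeq \id_{C(\Gamma')}$ (after the permitted rescaling). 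By \eqref{comp-J-P-1}, the off-diagonal entry $P\circ J\simeq 0$. The remaining off-diagonal entry $\xi_r^1\circ\chi_r^0$ is $\simeq 0$ by Lemma \ref{lemma-chi-xi-compose=0} (this is the ``right wide edge" version of that lemma, obtained by applying $C$ to the same local picture). Hence the composite is homotopic to $\id$ on the direct sum. This already shows $(\chi_r^0,J)$ is a split injection up to homotopy; combined with Proposition \ref{MOY-decomp-II} (which guarantees both objects have ``the same size"), a standard argument shows it must in fact be a homotopy equivalence, and then $\left(\begin{smallmatrix}P\\ \xi_r^1\end{smallmatrix}\right)$ is forced to be its homotopy inverse.

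To make this last step self-contained I would instead also compute the other composite $(\chi_r^0, J)\circ\left(\begin{smallmatrix}P\\ \xi_r^1\end{smallmatrix}\right) = \chi_r^0\circ P + J\circ\xi_r^1 : C(\Gamma)\to C(\Gamma)$ and show it is $\simeq \id_{C(\Gamma)}$. The point here is that the space of homotopy classes of degree-$0$ endomorphisms of $C(\Gamma)$ is, by Proposition \ref{MOY-decomp-II} together with the one-dimensionality of $\Hom$-spaces between copies of $C(\Gamma')$ in each degree, spanned by $\id_{C(\Gamma)}$ and by $\chi_r^0\circ P$ (an idempotent projecting onto one summand), so $\chi_r^0\circ P + J\circ\xi_r^1$ is a linear combination of these; evaluating on the two summands via the first composite computation shows the coefficients are right. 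The main obstacle is a bookkeeping one: one must be careful that the rescalings of $P$, $J$, $\chi_r^0$, $\xi_r^1$ that were used to normalize \eqref{comp-J-P-x-5}--\eqref{comp-J-P-1} and Lemma \ref{lemma-J-P-chi-xi-compose} are mutually compatible, i.e. that a single choice of scalars makes all of $P\circ\chi_r^0$, $\xi_r^1\circ J$, $P\circ\xi_r^0$, $\chi_r^1\circ J$ and $\chi_r^0\circ P + J\circ\xi_r^1$ (resp. $\xi_r^0\circ P + J\circ\chi_r^1$) land exactly on the identity; this is a finite check using that the relevant $\Hom$-spaces are one-dimensional, so no contradiction can arise. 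I do not expect any genuinely new computation beyond combining \eqref{eq-chi-comp}, \eqref{eq-xi-comp}, \eqref{comp-J-P-x-5}--\eqref{comp-J-P-1}, Lemma \ref{lemma-chi-xi-compose=0}, and Lemma \ref{lemma-J-P-chi-xi-compose}.
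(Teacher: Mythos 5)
Your proposal is correct and matches the paper's argument: one computes $\left(\begin{smallmatrix}P\\ \xi_r^1\end{smallmatrix}\right)\circ(\chi_r^0,J)\simeq\id$ entrywise from Lemmas \ref{lemma-chi-xi-compose=0} and \ref{lemma-J-P-chi-xi-compose} together with \eqref{comp-J-P-1}, and then concludes via Proposition \ref{MOY-decomp-II} and the Krull--Schmidt property of $\hmf$ (\cite[Proposition 25]{KR1}) --- which is exactly the ``standard argument'' you allude to. Your optional second computation is unnecessary, and its premise that the degree-$0$ endomorphism space of $C(\Gamma)$ is spanned by $\id$ and $\chi_r^0\circ P$ is not obviously true (maps of $q$-degree $2$ from $C(\Gamma')$ to itself, e.g.\ multiplication by the $x_i$, contribute further off-diagonal entries), so you should rely on the Krull--Schmidt route rather than that count.
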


\begin{proof}
We only prove the lemma for the homomorphisms in \eqref{homotopy-inverses-1}. The proof for the homomorphisms in \eqref{homotopy-inverses-2} is similar and left to the reader.

By Lemmas \ref{lemma-chi-xi-compose=0}, \ref{lemma-J-P-chi-xi-compose} and the homotopy \eqref{comp-J-P-1}, we have that
\[
\left(%
\begin{array}{c}
  P \\
  \xi_r^1 \\
\end{array}%
\right) \circ
(\chi_r^0, J) \simeq 
\left(%
\begin{array}{cc}
  \id_{C(\Gamma')\{q\}} & 0 \\
  0 & \id_{C(\Gamma')\{q^{-1}\}} \\
\end{array}%
\right).
\] 
Recall that $C(\Gamma) \simeq C(\Gamma')\{q\} \oplus C(\Gamma')\{q^{-1}\}$ and the homotopy category $\hmf$ of graded matrix factorizations with finite-dimensional cohomology is Krull-Schmidt (see \cite[Proposition 25]{KR1}). So the above implies that the homomorphisms in \eqref{homotopy-inverses-1} are homotopy equivalences and are homotopy inverses of each other.
\end{proof}

\subsection{Simplifying $C(\mathsf{b}^{2k})$} The proofs of $C(\mathsf{b}^{2k}) \simeq \mathscr{B}_k$ and $C(\mathsf{b}^{-2k}) \simeq \mathscr{B}_{-k}$ are very similar. We only give the details of the former and leave the latter to the reader. 

First, we recall the Gaussian elimination formulated by Bar-Natan in \cite{Bar-fast}.

\begin{lemma}\cite[Lemma 4.2]{Bar-fast}\label{gaussian-elimination}
Let $\mathcal{C}$ be an additive category, and
\[
\mathtt{I}=``\cdots\rightarrow C\xrightarrow{\left(%
\begin{array}{c}
  \alpha\\
  \beta \\
\end{array}%
\right)}
\left.%
\begin{array}{c}
  A\\
  \oplus \\
  D
\end{array}%
\right.
\xrightarrow{
\left(%
\begin{array}{cc}
  \phi & \delta\\
  \gamma & \varepsilon \\
\end{array}%
\right)}
\left.%
\begin{array}{c}
  B\\
  \oplus \\
  E
\end{array}%
\right.
\xrightarrow{
\left(%
\begin{array}{cc}
  \mu & \nu\\
\end{array}%
\right)} F \rightarrow \cdots"
\]
an object of $\ch(\mathcal{C})$, that is, a bounded chain complex over $\mathcal{C}$. Assume that $A\xrightarrow{\phi} B$ is an isomorphism in $\mathcal{C}$ with inverse $\phi^{-1}$. Then $\mathtt{I}$ is homotopic to 
\[
\mathtt{II}=
``\cdots\rightarrow C \xrightarrow{\beta} D
\xrightarrow{\varepsilon-\gamma\phi^{-1}\delta} E\xrightarrow{\nu} F \rightarrow \cdots".
\]
In particular, if $\delta$ or $\gamma$ is $0$, then $\mathtt{I}$ is homotopic to 
\[
\mathtt{II}=
``\cdots\rightarrow C \xrightarrow{\beta} D
\xrightarrow{\varepsilon} E\xrightarrow{\nu} F \rightarrow \cdots".
\]
\end{lemma}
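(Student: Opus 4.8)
The plan is to realize $\mathtt{I}$, via an \emph{isomorphism} of chain complexes, as a direct sum $\mathtt{II}\oplus\mathtt{K}$ with $\mathtt{K}$ the two-term complex $A\xrightarrow{\phi}B$, and then to discard $\mathtt{K}$ because $\phi$ is invertible. First I would record the consequences of $d^2=0$ in $\mathtt{I}$: the composition $C\to A\oplus D\to B\oplus E$ forces $\phi\alpha+\delta\beta=0$ and $\gamma\alpha+\varepsilon\beta=0$, and the composition $A\oplus D\to B\oplus E\to F$ forces $\mu\phi+\nu\gamma=0$ and $\mu\delta+\nu\varepsilon=0$. Only the first and the third of these are actually needed.

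Next I would introduce the automorphisms
\[
U=\begin{pmatrix}\id & \phi^{-1}\delta\\ 0 & \id\end{pmatrix}\colon A\oplus D\to A\oplus D,\qquad V=\begin{pmatrix}\id & 0\\ -\gamma\phi^{-1} & \id\end{pmatrix}\colon B\oplus E\to B\oplus E,
\]
which are invertible (invert by flipping the sign of the off-diagonal entry). Conjugating the differentials of $\mathtt{I}$ by the family $(\dots,\id_C,U,V,\id_F,\dots)$ yields a chain complex $\mathtt{I}'$ isomorphic to $\mathtt{I}$, with the same objects, the same differentials away from the affected spots, and with $U\binom{\alpha}{\beta}$, $V\bigl(\begin{smallmatrix}\phi&\delta\\\gamma&\varepsilon\end{smallmatrix}\bigr)U^{-1}$, $(\mu,\nu)V^{-1}$ in place of the three arrows involving $A$, $D$, $B$, $E$. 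A short matrix computation gives $V\bigl(\begin{smallmatrix}\phi&\delta\\\gamma&\varepsilon\end{smallmatrix}\bigr)U^{-1}=\bigl(\begin{smallmatrix}\phi&0\\0&\varepsilon-\gamma\phi^{-1}\delta\end{smallmatrix}\bigr)$, while $\phi\alpha+\delta\beta=0$ gives $U\binom{\alpha}{\beta}=\binom{0}{\beta}$ and $\mu\phi+\nu\gamma=0$ gives $(\mu,\nu)V^{-1}=(0,\nu)$. Thus $\mathtt{I}'$ is, literally as a chain complex, the direct sum of $\mathtt{II}=(\cdots\to C\xrightarrow{\beta}D\xrightarrow{\varepsilon-\gamma\phi^{-1}\delta}E\xrightarrow{\nu}F\to\cdots)$ with $\mathtt{K}=(0\to A\xrightarrow{\phi}B\to 0)$, the latter sitting in the homological degrees originally occupied by $A$ and $B$; in particular this also shows, as a byproduct, that $\mathtt{II}$ is indeed a chain complex.

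Finally, since $\phi$ is an isomorphism, $\mathtt{K}$ is contractible, a contracting homotopy being $\phi^{-1}\colon B\to A$; hence the inclusion of $\mathtt{II}$ into $\mathtt{I}'$ and the projection of $\mathtt{I}'$ onto $\mathtt{II}$ are mutually homotopy inverse, so $\mathtt{I}\simeq\mathtt{I}'\simeq\mathtt{II}$. The stated special case is immediate, as $\delta=0$ or $\gamma=0$ forces $\gamma\phi^{-1}\delta=0$. I do not expect a real obstacle; the only care needed is the bookkeeping of homological degrees (so that $\mathtt{II}\oplus\mathtt{K}$ reassembles $\mathtt{I}'$ term by term) and tracking which of the four $d^2=0$ relations is used at each step. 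Alternatively, one could follow Bar-Natan and exhibit explicit mutually homotopy-inverse chain maps between $\mathtt{I}$ and $\mathtt{II}$ together with the homotopy, but the splitting argument above seems the most transparent.
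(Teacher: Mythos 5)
Your proof is correct. The paper itself offers no proof of this lemma --- it is quoted verbatim from Bar-Natan's paper as a citation --- so there is nothing in the source to compare against line by line; but your argument is complete and is the standard one. The verifications all check out: $d^2=0$ gives $\phi\alpha+\delta\beta=0$ and $\mu\phi+\nu\gamma=0$, and conjugating by the elementary automorphisms $U$ and $V$ does produce $\bigl(\begin{smallmatrix}\phi&0\\0&\varepsilon-\gamma\phi^{-1}\delta\end{smallmatrix}\bigr)$ in the middle, $\binom{0}{\beta}$ on the left, and $(0,\nu)$ on the right, splitting off the contractible summand $A\xrightarrow{\phi}B$. Bar-Natan's own proof instead writes down the mutually homotopy-inverse chain maps and the homotopy explicitly (these are exactly the composites of your inclusion/projection with $U^{\pm1}$, $V^{\pm1}$, and the contraction $\phi^{-1}$), so the two arguments are equivalent in content; your change-of-basis packaging has the small advantage of making it obvious that $\mathtt{II}$ is a chain complex and that the equivalence is natural with respect to the untouched part of the complex.
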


\begin{lemma}\label{lemma-b+2} 
$C(\mathsf{b}^{2}) \simeq \mathscr{B}_1$
\end{lemma}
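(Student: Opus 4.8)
The plan is to compute $C(\mathsf{b}^2)$ directly by resolving each of the two positive crossings via the Khovanov–Rozansky cube of resolutions, then to simplify the resulting chain complex of matrix factorizations using the second MOY decomposition (Proposition \ref{MOY-decomp-II}) together with Gaussian elimination (Lemma \ref{gaussian-elimination}). Recall that a single positive crossing is assigned the two-term complex $C(\Gamma_0)\{q^{-1}\}\xrightarrow{\chi^0} C(\Gamma_1)$ (up to the appropriate grading shift fixed in \cite{KR1}). Tensoring two such complexes, $C(\mathsf{b}^2)$ is the total complex of a $2\times 2$ square whose four vertices are $C(\Gamma_0\Gamma_0)$, $C(\Gamma_0\Gamma_1)$, $C(\Gamma_1\Gamma_0)$, $C(\Gamma_1\Gamma_1)$, where $\Gamma_i\Gamma_j$ denotes the marked MOY graph obtained by stacking the $i$-resolution on top of the $j$-resolution in the two slots of $\mathsf{b}^2$, and the edge maps are the various $\chi^0$'s on the left and right crossings.

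First I would identify these four MOY graphs. Composing $\Gamma_0$ with $\Gamma_0$ gives simply $\Gamma_0$ (two parallel arcs), so $C(\Gamma_0\Gamma_0)\simeq C(\Gamma_0)$. Composing $\Gamma_0$ with $\Gamma_1$ (in either order) gives exactly $\Gamma_1$, so those two vertices are $C(\Gamma_1)$. The remaining vertex $\Gamma_1\Gamma_1$ — a wide edge stacked on a wide edge — is precisely the graph $\Gamma$ of Figure \ref{MOY-decomp-II-fig} (with the middle edges labelled $x_5,x_6$), and Proposition \ref{MOY-decomp-II} gives $C(\Gamma_1\Gamma_1)=C(\Gamma)\simeq C(\Gamma_1)\{q\}\oplus C(\Gamma_1)\{q^{-1}\}$ via the explicit homotopy equivalences of Lemma \ref{lemma-homotopy-inverses}. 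Under this identification the structure maps of the square emanating from and into $\Gamma_1\Gamma_1$ become explicit: using Lemma \ref{lemma-J-P-chi-xi-compose} and Lemma \ref{lemma-chi-xi-compose=0}, the composite of $\chi^0$ (on one crossing) with the projection $(P,\xi_r^1)^{T}$, resp. the composite of the inclusion $(\chi_r^0,J)$ with $\chi^1$, is $\mathrm{id}$ on one summand and $0$ (up to homotopy) on the other. Thus after substituting the decomposition of $C(\Gamma)$ into the total complex, one isolated summand $C(\Gamma_1)\{q^{s}\}$ appears connected to one of the two $C(\Gamma_1)$ vertices by an isomorphism; Gaussian elimination (Lemma \ref{gaussian-elimination}) cancels this acyclic pair.

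After the cancellation the total complex collapses to a three-term complex of the shape
\[
0\rightarrow C(\Gamma_1)\{q^{s_{-2}}\}\xrightarrow{\;x_1-x_3\;} C(\Gamma_1)\{q^{s_{-1}}\}\xrightarrow{\;\chi^1\;} C(\Gamma_0)\{q^{2(N-1)}\}\rightarrow 0,
\]
which is exactly $\mathscr{B}_1$. The two remaining maps are pinned down as follows: the surviving map $C(\Gamma_1)\to C(\Gamma_0)$ is the original $\chi^1$ (up to scaling), while the induced map between the two $C(\Gamma_1)$ vertices is the Gaussian-elimination correction term $\varepsilon-\gamma\phi^{-1}\delta$, which by equations \eqref{eq-chi-comp}–\eqref{eq-xi-comp} and \eqref{comp-J-P-x-5}–\eqref{comp-J-P-1} evaluates to multiplication by $x_1-x_3$ (this is where the virtual-crossing homomorphisms $\xi^0,\xi^1$ and the identities $\xi^1\circ\xi^0=(x_1-x_3)\mathrm{id}$ do their work; the other possible value $x_1-x_4$ does not occur here because only one elimination is performed). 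Finally one checks the grading bookkeeping: the $q$-shift on a positive crossing contributes $2(N-1)$ after passing from the Khovanov–Rozansky normalization to the one used in the statement, the homological degrees are $-2,-1,0$ as required, and $s_{-2}=2(N-1)+3$, $s_{-1}=2(N-1)+1$ match the formula $s_{-l}=2(N-1)+2l-1$ with $k=1$.

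The main obstacle is the bookkeeping of signs, scalars, and $q$- and $\zed_2$-grading shifts: one must carry the precise normalizing shifts through the tensor product of the two single-crossing complexes, verify that the explicit projection/inclusion maps of Lemma \ref{lemma-homotopy-inverses} are compatible (up to the stated nonzero scalars) with the $\chi$-maps coming from the \emph{other} crossing, and confirm that the leftover differential is $x_1-x_3$ rather than $x_1-x_4$ or a scalar multiple thereof. None of this is conceptually deep — it is a single application of the MOY-$\mathrm{II}$ decomposition followed by one Gaussian elimination — but getting every shift to land on the nose is the delicate part, and it is exactly the base case that will be fed into the induction proving $C(\mathsf{b}^{2k})\simeq\mathscr{B}_k$ for general $k$.
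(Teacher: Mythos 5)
Your proposal follows essentially the same route as the paper: expand $C(\mathsf{b}^2)$ into its cube of resolutions, apply the second MOY decomposition to the doubly-singular resolution via the explicit inclusion/projection maps of Lemma \ref{lemma-homotopy-inverses}, cancel the resulting acyclic pair by Gaussian elimination, and identify the surviving differential as $x_1-x_3$ using the $\xi$-maps and the relation $\xi^0\circ\xi^1=(x_1-x_3)\,\id$. One slip worth correcting: in the convention of \cite{KR1} used here, a \emph{positive} crossing is resolved as $C(\Gamma_1)\{q^{N}\}\xrightarrow{\chi^1}C(\Gamma_0)\{q^{N-1}\}$ with $C(\Gamma_0)$ in homological degree $0$, not as $C(\Gamma_0)\{q^{-1}\}\xrightarrow{\chi^0}C(\Gamma_1)$; taken literally, your setup would yield a complex supported in non-negative homological degrees (i.e.\ the shape of $\mathscr{B}_{-1}$) rather than $\mathscr{B}_1$, although the remainder of your argument and your final three-term complex use the correct orientation, so this is a convention error rather than a gap in the method.
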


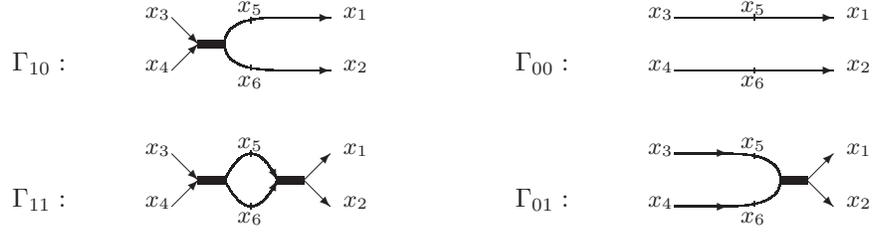
\begin{figure}[ht]
\[
\xymatrix{
\Gamma_{10}: & \setlength{\unitlength}{1pt}
\begin{picture}(80,20)(-10,0)

\put(-10,20){\small{$x_3$}}

\put(-10,0){\small{$x_4$}}

\put(0,0){\vector(1,1){10}}

\put(0,20){\vector(1,-1){10}}

\qbezier(20,10)(20,20)(40,20)

\qbezier(20,10)(20,0)(40,0)

\put(30,18){\line(0,1){2}}

\put(30,0){\line(0,1){2}}

\put(25,22){\small{$x_5$}}

\put(25,-6){\small{$x_6$}}

\put(40,20){\vector(1,0){20}}

\put(40,0){\vector(1,0){20}}

\put(65,20){\small{$x_1$}}

\put(65,0){\small{$x_2$}}

\linethickness{3pt}

\put(10,10){\line(1,0){10}}

\end{picture} && \Gamma_{00}: & \setlength{\unitlength}{1pt}
\begin{picture}(80,20)(-10,0)

\put(-10,20){\small{$x_3$}}

\put(-10,0){\small{$x_4$}}

\put(0,0){\vector(1,0){60}}

\put(0,20){\vector(1,0){60}}

\put(65,20){\small{$x_1$}}

\put(65,0){\small{$x_2$}}

\put(30,19){\line(0,1){2}}

\put(30,-1){\line(0,1){2}}

\put(25,22){\small{$x_5$}}

\put(25,-6){\small{$x_6$}}

\end{picture} \\
\Gamma_{11}: & \setlength{\unitlength}{1pt}
\begin{picture}(80,20)(-10,0)

\put(-10,20){\small{$x_3$}}

\put(-10,0){\small{$x_4$}}

\put(0,0){\vector(1,1){10}}

\put(0,20){\vector(1,-1){10}}

\qbezier(20,10)(30,30)(40,10)

\qbezier(20,10)(30,-10)(40,10)

\put(40,10){\vector(1,-2){0}}

\put(40,10){\vector(1,2){0}}

\put(30,19){\line(0,1){2}}

\put(30,-1){\line(0,1){2}}

\put(25,22){\small{$x_5$}}

\put(25,-6){\small{$x_6$}}

\put(50,10){\vector(1,1){10}}

\put(50,10){\vector(1,-1){10}}

\put(65,20){\small{$x_1$}}

\put(65,0){\small{$x_2$}}

\linethickness{3pt}

\put(10,10){\line(1,0){10}}

\put(40,10){\line(1,0){10}}

\end{picture} && \Gamma_{01}: & \setlength{\unitlength}{1pt}
\begin{picture}(80,20)(-10,0)

\put(-10,20){\small{$x_3$}}

\put(-10,0){\small{$x_4$}}

\put(0,0){\vector(1,0){20}}

\put(0,20){\vector(1,0){20}}

\qbezier(20,20)(40,20)(40,10)

\qbezier(20,0)(40,0)(40,10)

\put(30,18){\line(0,1){2}}

\put(30,0){\line(0,1){2}}

\put(25,22){\small{$x_5$}}

\put(25,-6){\small{$x_6$}}

\put(50,10){\vector(1,1){10}}

\put(50,10){\vector(1,-1){10}}

\put(65,20){\small{$x_1$}}

\put(65,0){\small{$x_2$}}

\linethickness{3pt}

\put(40,10){\line(1,0){10}}

\end{picture} \\
}
\]

\caption{MOY resolutions of $\mathsf{b}^{2}$}\label{chain-b+2-fig}

\end{figure}

\begin{proof}
Let $\Gamma_{00}$, $\Gamma_{01}$, $\Gamma_{10}$ and $\Gamma_{11}$ be the MOY graphs in Figure \ref{chain-b+2-fig}. Then, by definition $C(\mathsf{b}^{2})$ is the chain complex
\[
0 \rightarrow C(\Gamma_{11})\{q^{2N}\} \xrightarrow{\left(%
\begin{array}{c}
  \chi_r^1 \\
  \chi_l^1 \\
\end{array}%
\right)} {\left.%
\begin{array}{c}
  C(\Gamma_{10})\{q^{2N-1}\} \\
  \oplus \\
  C(\Gamma_{01})\{q^{2N-1}\} 
\end{array}%
\right.}
\xrightarrow{(-\chi_l^1,\chi_r^1)}
C(\Gamma_{00})\{q^{2N-2}\} \rightarrow 0,
\]
where the lower index ``$r$" (resp. ``$l$") means the homomorphism is associated to the wide edge on the right side (resp. left side). In this chain complex, we replace $C(\Gamma_{11})\{q^{2N}\}$ with $\left.%
\begin{array}{c}
  C(\Gamma_{10})\{q^{2N-1}\} \\
  \oplus \\
  C(\Gamma'')\{q^{2N+1}\} 
\end{array}%
\right.$ using the homotopy equivalence
\[
\left.%
\begin{array}{c}
  C(\Gamma_{10})\{q^{2N-1}\} \\
  \oplus \\
  C(\Gamma'')\{q^{2N+1}\} 
\end{array}%
\right. 
\xrightarrow{(J,\xi_r^0)}
C(\Gamma_{11})\{q^{2N}\}
\]
given in Lemma \ref{lemma-homotopy-inverses}, where $\Gamma''$ is the MOY graph in Figure \ref{trick-fig}. 

Thus, $C(\mathsf{b}^{2})$ is isomorphic to the chain complex
\[
0 \rightarrow \left.%
\begin{array}{c}
  C(\Gamma_{10})\{q^{2N-1}\} \\
  \oplus \\
  C(\Gamma'')\{q^{2N+1}\}
\end{array}%
\right. \xrightarrow{\left(%
\begin{array}{cc}
  \chi_r^1 \circ J & \chi_r^1\circ\xi_r^0 \\
  \chi_l^1 \circ J & \chi_l^1\circ\xi_r^0 \\
\end{array}%
\right)} {\left.%
\begin{array}{c}
  C(\Gamma_{10})\{q^{2N-1}\} \\
  \oplus \\
  C(\Gamma_{01})\{q^{2N-1}\} 
\end{array}%
\right.}
\xrightarrow{(-\chi_l^1,\chi_r^1)}
C(\Gamma_{00})\{q^{2N-2}\} \rightarrow 0.
\]
From Lemmas \ref{lemma-chi-xi-compose=0} and \ref{lemma-J-P-chi-xi-compose}, we have $\chi_r^1\circ\xi_r^0 \simeq 0$ and $\chi_r^1 \circ J \simeq \id$. So, by Lemma \ref{gaussian-elimination}, $C(\mathsf{b}^{2})$ is homotopic to the chain complex
\begin{equation}\label{chain-almost-B-1}
0 \rightarrow  C(\Gamma'')\{q^{2N+1}\} \xrightarrow{\chi_l^1\circ\xi_r^0} C(\Gamma_{01})\{q^{2N-1}\} \xrightarrow{\chi_r^1} C(\Gamma_{00})\{q^{2N-2}\} \rightarrow 0.
\end{equation}

\begin{equation}\label{lemma-b+2-diagram-1}
\xymatrix{
\setlength{\unitlength}{1pt}
\begin{picture}(80,20)(-10,0)

\put(-10,20){\small{$x_3$}}

\put(-10,0){\small{$x_4$}}

\put(0,0){\vector(1,1){10}}

\put(0,20){\vector(1,-1){10}}

\qbezier(20,10)(35,25)(50,10)

\qbezier(20,10)(35,-5)(50,10)

\put(50,10){\circle{4}}

\put(35,16.5){\line(0,1){2}}

\put(35,1.5){\line(0,1){2}}

\put(30,22){\small{$x_5$}}

\put(30,-6){\small{$x_6$}}

\put(50,10){\vector(1,1){10}}

\put(50,10){\vector(1,-1){10}}

\put(65,20){\small{$x_1$}}

\put(65,0){\small{$x_2$}}

\linethickness{3pt}

\put(10,10){\line(1,0){10}}

\end{picture} \ar[rr]^{\xi_r^0} \ar@<-4ex>[d]^{\chi_l^1} && \setlength{\unitlength}{1pt}
\begin{picture}(80,20)(-10,0)

\put(-10,20){\small{$x_3$}}

\put(-10,0){\small{$x_4$}}

\put(0,0){\vector(1,1){10}}

\put(0,20){\vector(1,-1){10}}

\qbezier(20,10)(30,30)(40,10)

\qbezier(20,10)(30,-10)(40,10)

\put(40,10){\vector(1,-2){0}}

\put(40,10){\vector(1,2){0}}

\put(30,19){\line(0,1){2}}

\put(30,-1){\line(0,1){2}}

\put(25,22){\small{$x_5$}}

\put(25,-6){\small{$x_6$}}

\put(50,10){\vector(1,1){10}}

\put(50,10){\vector(1,-1){10}}

\put(65,20){\small{$x_1$}}

\put(65,0){\small{$x_2$}}

\linethickness{3pt}

\put(10,10){\line(1,0){10}}

\put(40,10){\line(1,0){10}}

\end{picture} \ar@<-4ex>[d]^{\chi_l^1} \\
\setlength{\unitlength}{1pt}
\begin{picture}(80,20)(-10,0)

\put(-10,20){\small{$x_3$}}

\put(-10,0){\small{$x_4$}}

\put(0,0){\vector(1,0){20}}

\put(0,20){\vector(1,0){20}}

\qbezier(20,20)(40,20)(50,10)

\qbezier(20,0)(40,0)(50,10)

\put(50,10){\circle{4}}

\put(35,17){\line(0,1){2}}

\put(35,1){\line(0,1){2}}

\put(30,22){\small{$x_5$}}

\put(30,-6){\small{$x_6$}}

\put(50,10){\vector(1,1){10}}

\put(50,10){\vector(1,-1){10}}

\put(65,20){\small{$x_1$}}

\put(65,0){\small{$x_2$}}

\end{picture} \ar[rr]^{\xi_r^0} && \setlength{\unitlength}{1pt}
\begin{picture}(80,20)(-10,0)

\put(-10,20){\small{$x_3$}}

\put(-10,0){\small{$x_4$}}

\put(0,0){\vector(1,0){20}}

\put(0,20){\vector(1,0){20}}

\qbezier(20,20)(40,20)(40,10)

\qbezier(20,0)(40,0)(40,10)

\put(30,18){\line(0,1){2}}

\put(30,0){\line(0,1){2}}

\put(25,22){\small{$x_5$}}

\put(25,-6){\small{$x_6$}}

\put(50,10){\vector(1,1){10}}

\put(50,10){\vector(1,-1){10}}

\put(65,20){\small{$x_1$}}

\put(65,0){\small{$x_2$}}

\linethickness{3pt}

\put(40,10){\line(1,0){10}}

\end{picture}
}
\end{equation}
Diagram \eqref{lemma-b+2-diagram-1} commutes since $\chi_l^1$ and $\xi_r^0$ act on disjoint parts of the MOY graphs. That is, 
\begin{equation}\label{eq-xi-chi-lr-commute}
\chi_l^1\circ\xi_r^0 = \xi_r^0 \circ \chi_l^1.
\end{equation}

\begin{equation}\label{lemma-b+2-diagram-2}
\xymatrix{
\setlength{\unitlength}{1pt}
\begin{picture}(80,20)(-10,0)

\put(-10,20){\small{$x_3$}}

\put(-10,0){\small{$x_4$}}

\put(0,0){\vector(1,1){10}}

\put(0,20){\vector(1,-1){10}}

\put(50,10){\vector(1,1){10}}

\put(50,10){\vector(1,-1){10}}

\put(65,20){\small{$x_1$}}

\put(65,0){\small{$x_2$}}

\linethickness{3pt}

\put(10,10){\line(1,0){40}}

\end{picture} \ar[r]^{\xi^1} \ar@<-4ex>[d]^{\simeq} & \setlength{\unitlength}{1pt}
\begin{picture}(80,20)(-10,0)

\put(-10,20){\small{$x_3$}}

\put(-10,0){\small{$x_4$}}

\put(0,0){\vector(1,0){20}}

\put(0,20){\vector(1,0){20}}

\qbezier(20,20)(40,20)(50,10)

\qbezier(20,0)(40,0)(50,10)

\put(50,10){\circle{4}}

\put(35,17){\line(0,1){2}}

\put(35,1){\line(0,1){2}}

\put(30,22){\small{$x_5$}}

\put(30,-6){\small{$x_6$}}

\put(50,10){\vector(1,1){10}}

\put(50,10){\vector(1,-1){10}}

\put(65,20){\small{$x_1$}}

\put(65,0){\small{$x_2$}}

\end{picture} \ar[r]^{\xi^0} \ar@<-4ex>[d]^{=} &  \ar@<-4ex>[d]^{\simeq} \\
\setlength{\unitlength}{1pt}
\begin{picture}(80,20)(-10,0)

\put(-10,20){\small{$x_3$}}

\put(-10,0){\small{$x_4$}}

\put(0,0){\vector(1,1){10}}

\put(0,20){\vector(1,-1){10}}

\qbezier(20,10)(35,25)(50,10)

\qbezier(20,10)(35,-5)(50,10)

\put(50,10){\circle{4}}

\put(35,16.5){\line(0,1){2}}

\put(35,1.5){\line(0,1){2}}

\put(30,22){\small{$x_5$}}

\put(30,-6){\small{$x_6$}}

\put(50,10){\vector(1,1){10}}

\put(50,10){\vector(1,-1){10}}

\put(65,20){\small{$x_1$}}

\put(65,0){\small{$x_2$}}

\linethickness{3pt}

\put(10,10){\line(1,0){10}}

\end{picture}  \ar[r]^{\chi_l^1} & \setlength{\unitlength}{1pt}
\begin{picture}(80,20)(-10,0)

\put(-10,20){\small{$x_3$}}

\put(-10,0){\small{$x_4$}}

\put(0,0){\vector(1,0){20}}

\put(0,20){\vector(1,0){20}}

\qbezier(20,20)(40,20)(50,10)

\qbezier(20,0)(40,0)(50,10)

\put(50,10){\circle{4}}

\put(35,17){\line(0,1){2}}

\put(35,1){\line(0,1){2}}

\put(30,22){\small{$x_5$}}

\put(30,-6){\small{$x_6$}}

\put(50,10){\vector(1,1){10}}

\put(50,10){\vector(1,-1){10}}

\put(65,20){\small{$x_1$}}

\put(65,0){\small{$x_2$}}

\end{picture} \ar[r]^{\xi_r^0} & \setlength{\unitlength}{1pt}
\begin{picture}(80,20)(-10,0)

\put(-10,20){\small{$x_3$}}

\put(-10,0){\small{$x_4$}}

\put(0,0){\vector(1,0){20}}

\put(0,20){\vector(1,0){20}}

\qbezier(20,20)(40,20)(40,10)

\qbezier(20,0)(40,0)(40,10)

\put(30,18){\line(0,1){2}}

\put(30,0){\line(0,1){2}}

\put(25,22){\small{$x_5$}}

\put(25,-6){\small{$x_6$}}

\put(50,10){\vector(1,1){10}}

\put(50,10){\vector(1,-1){10}}

\put(65,20){\small{$x_1$}}

\put(65,0){\small{$x_2$}}

\linethickness{3pt}

\put(40,10){\line(1,0){10}}

\end{picture}
}
\end{equation}
Next we identify both $C(\Gamma'')$ and $C(\Gamma_{01})$ with $C(\Gamma')$ in Figure \ref{MOY-decomp-II-fig} using the apparent homotopy equivalences and consider diagram \eqref{lemma-b+2-diagram-2}. Recall that, up to homotopy and scaling, $\xi^1$ and $\xi^0$ are the unique homotopically non-trivial homomorphisms of $q$-degree $1$ and that $\chi_l^1$ and $\xi_r^0$ are also homotopically non-trivial homomorphisms of $q$-degree $1$. This implies that diagram \eqref{lemma-b+2-diagram-2} commutes up to homotopy and scaling. Thus, there is a non-zero scalar $c\in \C$ such that  
\begin{equation}\label{eq-switch-to-xi}
\xi_r^0 \circ \chi_l^1 \simeq c \cdot \xi^0 \circ \xi^1 = c \cdot (x_1-x_3) \cdot \id_{C(\Gamma')}.
\end{equation}

Putting equations \eqref{eq-xi-chi-lr-commute} and \eqref{eq-switch-to-xi} together, we know that chain complex \eqref{chain-almost-B-1} is isomorphic to
\[
0 \rightarrow  C(\Gamma')\{q^{2N+1}\} \xrightarrow{c(x_1-x_3)} C(\Gamma')\{q^{2N-1}\} \xrightarrow{\chi_r^1} C(\Gamma_{00})\{q^{2N-2}\} \rightarrow 0,
\]
which is clearly isomorphic to $\mathscr{B}_1$.
\end{proof}

\begin{proof}[Proof of Theorem \ref{thm-simplified-chain-complex}] 
As mentioned above, we only prove that 
\begin{equation}\label{homotopy-simplified-chain-complex+2k}
C(\mathsf{b}^{2k}) \simeq \mathscr{B}_k.
\end{equation}
The proof of $C(\mathsf{b}^{-2k}) \simeq \mathscr{B}_{-k}$ is very similar and left to the reader.

We prove \eqref{homotopy-simplified-chain-complex+2k} by an induction on $k$. The $k=1$ case is already established in Lemma \ref{lemma-b+2}. Assume \eqref{homotopy-simplified-chain-complex+2k} is true for a given $k\geq 1$. Consider $C(\mathsf{b}^{2k+2})$.

\begin{figure}[ht]
\[
\xymatrix{
\mathsf{b}^{2k+2}: & \setlength{\unitlength}{1pt}
\begin{picture}(160,20)(-10,-20)

\put(-10,0){\small{$x_3$}}

\put(-10,-20){\small{$x_4$}}

\qbezier(0,0)(5,0)(10,-10)
\qbezier(10,-10)(15,-20)(20,-20)

\qbezier(0,-20)(5,-20),(9,-12)
\qbezier(11,-8)(15,0),(20,0)

\qbezier(20,0)(25,0)(30,-10)
\qbezier(30,-10)(35,-20)(40,-20)

\qbezier(20,-20)(25,-20),(29,-12)
\qbezier(31,-8)(35,0),(40,0)

\qbezier(40,0)(45,0)(50,-10)
\qbezier(50,-10)(55,-20)(60,-20)

\qbezier(40,-20)(45,-20),(49,-12)
\qbezier(51,-8)(55,0),(60,0)

\put(62,-20){\vector(1,0){0}}

\put(62,0){\vector(1,0){0}}

\put(65,-12){$\cdots$}

\qbezier(80,0)(85,0)(90,-10)
\qbezier(90,-10)(95,-20)(100,-20)

\qbezier(80,-20)(85,-20),(89,-12)
\qbezier(91,-8)(95,0),(100,0)

\put(100,-1){\line(0,1){2}}

\put(100,-21){\line(0,1){2}}

\put(95,2){\small{$x_5$}}

\put(95,-26){\small{$x_6$}}

\qbezier(100,0)(105,0)(110,-10)
\qbezier(110,-10)(115,-20)(120,-20)

\qbezier(100,-20)(105,-20),(109,-12)
\qbezier(111,-8)(115,0),(120,0)

\qbezier(120,0)(125,0)(130,-10)
\qbezier(130,-10)(135,-20)(140,-20)

\qbezier(120,-20)(125,-20),(129,-12)
\qbezier(131,-8)(135,0),(140,0)

\put(142,-20){\vector(1,0){0}}

\put(142,0){\vector(1,0){0}}

\put(145,0){\small{$x_1$}}

\put(145,-20){\small{$x_2$}}

\end{picture} 
}
\]

\caption{A diagram of $\mathsf{b}^{2k+2}$}\label{braid-2k+2-fig}

\end{figure}
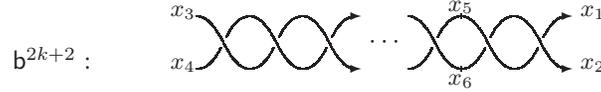

We put markings on $\mathsf{b}^{2k+2}$ as in Figure \ref{braid-2k+2-fig}. Let $\Gamma_{00}$, $\Gamma_{01}$, $\Gamma_{10}$ and $\Gamma_{11}$ be the MOY graphs in Figure \ref{chain-b+2-fig}. By the induction hypothesis and Lemma \ref{lemma-b+2}, $C(\mathsf{b}^{2k+2})=C(\mathsf{b}^{2k}) \otimes_{\C[x_5,x_6]} C(\mathsf{b}^{2})$ is homotopic to the total complex of the following double complex \eqref{complex-b+2k+2}\footnote{To simplify the notations, we omit the $q$-grading shifts applied to the rows of double complex \eqref{complex-b+2k+2} and will do the same in the rest of this proof.}.
\begin{equation}\label{complex-b+2k+2}
\xymatrix{
0 \ar[r]& C(\Gamma_{01})\{q^3\} \ar[rr]^{x_1-x_5} && C(\Gamma_{01})\{q\} \ar[rr]^{\chi_r^1} && C(\Gamma_{00}) \ar[r] & 0 \\
0 \ar[r]& C(\Gamma_{11})\{q^3\} \ar[rr]^{x_1-x_5} \ar[u]^{\chi_l^1} && C(\Gamma_{11})\{q\} \ar[rr]^{\chi_r^1} \ar[u]^{-\chi_l^1} && C(\Gamma_{10}) \ar[r] \ar[u]^{\chi_l^1} & 0 \\
 & \vdots \ar[u]^{x_5-x_3} && \vdots \ar[u]^{-(x_5-x_3)} && \vdots \ar[u]^{x_5-x_3} &  \\
0 \ar[r]& C(\Gamma_{11})\{q^3\} \ar[rr]^{x_1-x_5} \ar[u]^{x_5-x_4} && C(\Gamma_{11})\{q\} \ar[rr]^{\chi_r^1} \ar[u]^{-(x_5-x_4)} && C(\Gamma_{10}) \ar[r] \ar[u]^{x_5-x_4} & 0 \\
0 \ar[r]& C(\Gamma_{11})\{q^3\} \ar[rr]^{x_1-x_5} \ar[u]^{x_5-x_3} && C(\Gamma_{11})\{q\} \ar[rr]^{\chi_r^1} \ar[u]^{-(x_5-x_3)} && C(\Gamma_{10}) \ar[r] \ar[u]^{x_5-x_3} & 0
}
\end{equation}

Note that, except the top row, all the other rows in \eqref{complex-b+2k+2} are of the form 
\begin{equation}\label{complex-b+2k+2-one-row}
\xymatrix{
0 \ar[r]& C(\Gamma_{11})\{q^3\} \ar[rr]^{x_1-x_5}  && C(\Gamma_{11})\{q\} \ar[rr]^{\chi_r^1}  && C(\Gamma_{10}) \ar[r]  & 0.
}
\end{equation}
In the chain complex \eqref{complex-b+2k+2-one-row}, we replace the first $C(\Gamma_{11})$ with $\left.%
\begin{array}{c}
  C(\Gamma_{10})\{q\} \\
  \oplus \\
  C(\Gamma'')\{q^{-1}\} 
\end{array}%
\right.$ using the homotopy equivalence \eqref{homotopy-inverses-1} and the second $C(\Gamma_{11})$ with $\left.%
\begin{array}{c}
  C(\Gamma'')\{q\} \\
  \oplus \\
  C(\Gamma_{10})\{q^{-1}\} 
\end{array}%
\right.$ using the homotopy equivalence \eqref{homotopy-inverses-2}. This means that complex \ref{complex-b+2k+2-one-row} is isomorphic to 

\begin{equation}\label{complex-b+2k+2-one-row-2}
\xymatrix{
0 \ar[r]& {\left.%
\begin{array}{c}
  C(\Gamma_{10})\{q^4\} \\
  \oplus \\
  C(\Gamma'')\{q^{2}\} 
\end{array}%
\right.} \ar[rr]^{d}  && {\left.%
\begin{array}{c}
  C(\Gamma'')\{q^2\} \\
  \oplus \\
  C(\Gamma_{10})
\end{array}%
\right.} \ar[rr]^>>>>>>>>>>{(\chi_r^1 \circ \xi_r^0 , ~\chi_r^1 \circ J)}  && C(\Gamma_{10}) \ar[r]  & 0,
}
\end{equation}
where 
\[
d = \left(%
\begin{array}{cc}
  P \circ \mathtt{m}(x_1-x_5) \circ \chi_r^0 & P \circ \mathtt{m}(x_1-x_5) \circ J \\
  \chi_r^1 \circ \mathtt{m}(x_1-x_5) \circ \chi_r^0 & \chi_r^1 \circ \mathtt{m}(x_1-x_5) \circ J
\end{array}%
\right).
\]
By Lemma \ref{lemma-J-P-chi-xi-compose}, $C(\Gamma_{10}) \xrightarrow{\chi_r^1 \circ J} C(\Gamma_{10})$ is a homotopy equivalence. So, by Lemma \ref{gaussian-elimination}, complex \eqref{complex-b+2k+2-one-row-2} is homotopic to 
\begin{equation}\label{complex-b+2k+2-one-row-3}
\xymatrix{
0 \ar[r]& {\left.%
\begin{array}{c}
  C(\Gamma_{10})\{q^4\} \\
  \oplus \\
  C(\Gamma'')\{q^{2}\} 
\end{array}%
\right.} & \ar[rrrr]^{(P \circ \mathtt{m}(x_1-x_5) \circ \chi_r^0,~ P \circ \mathtt{m}(x_1-x_5) \circ J)}  &&&& &
  C(\Gamma'')\{q^2\} \ar[r]  & 0,
}
\end{equation}
Note that $P$ and $J$ commute with $\mathtt{m}(x_1)$. By homotopy \eqref{comp-J-P-x-5} and \eqref{comp-J-P-1}, it is easy to see that $P \circ \mathtt{m}(x_1-x_5) \circ J$ is a homotopy equivalence. Using Lemma \ref{gaussian-elimination} again, we get that complex \eqref{complex-b+2k+2-one-row-3} is homotopic to 
\[
\xymatrix{
0 \ar[r]& C(\Gamma_{10})\{q^4\} \ar[r] & 0}.
\]

The above shows that $C(\mathsf{b}^{2k+2})$ is homotopic to the chain complex
\begin{equation}\label{complex-b+2k+2-reduced-1}
\xymatrix{
 C(\Gamma_{01})\{q^3\} \ar[rr]^{x_1-x_5} && C(\Gamma_{01})\{q\} \ar[rr]^{\chi_r^1} && C(\Gamma_{00}) \ar[r] & 0 \\
 C(\Gamma_{10})\{q^4\}  \ar[u]^{\chi_l^1\circ\chi_r^0}  \\
 \vdots \ar[u]^{P \circ\mathtt{m}(x_5-x_3) \circ\chi_r^0}  \\
 C(\Gamma_{10})\{q^4\}  \ar[u]^{P \circ\mathtt{m}(x_5-x_4) \circ\chi_r^0}  \\
C(\Gamma_{10})\{q^4\}  \ar[u]^{P \circ\mathtt{m}(x_5-x_3) \circ\chi_r^0} \\
0 \ar[u]
}
\end{equation}

Note that 
\begin{itemize}
	\item $\chi_r^0$ commutes with $\mathtt{m}(x_3)$, $\mathtt{m}(x_4)$ and $\mathtt{m}(x_5)$ as endomorphisms of $C(\Gamma_{10})$,
	\item by Lemma \ref{lemma-J-P-chi-xi-compose}, $P \circ \chi_r^0 \simeq c \cdot \id_{C(\Gamma_{10})}$ for some non-zero scalar $c \in \C$,
	\item $\mathtt{m}(x_1) \simeq \mathtt{m}(x_5)$ as endomorphisms of $C(\Gamma_{10})$.
\end{itemize}
Thus, as endomorphisms of $C(\Gamma_{10})$,
\begin{equation}\label{homotopy-b+2k+2-1}
P \circ\mathtt{m}(x_5-x_3) \circ\chi_r^0 \simeq c\cdot \mathtt{m}(x_1-x_3), \hspace{1pc} P \circ\mathtt{m}(x_5-x_4) \circ\chi_r^0 \simeq c\cdot \mathtt{m}(x_1-x_4).
\end{equation}
Moreover, note that $\mathtt{m}(x_3) \simeq \mathtt{m}(x_5)$ as endomorphisms of $C(\Gamma_{01})$. So, as endomorphisms of $C(\Gamma_{01})$,
\begin{equation}\label{homotopy-b+2k+2-1-extra}
\mathtt{m}(x_1-x_5) \simeq \mathtt{m}(x_1-x_3).
\end{equation}

\begin{equation}\label{complex-b+2k+2-diagram-1}
\xymatrix{
C(\Gamma_{10}) \ar[rr]^{\chi_r^0} \ar[d]^{\chi_l^1} && C(\Gamma_{11}) \ar[d]^{\chi_l^1}  \\
C(\Gamma_{00}) \ar[rr]^{\chi_r^0} && C(\Gamma_{01})
}
\end{equation}

Now consider $C(\Gamma_{10}) \xrightarrow{\chi_l^1\circ\chi_r^0} C(\Gamma_{01})$. The homomorphisms $\chi_l^1$ and $\chi_r^0$ act on disjoint parts of the MOY graph. So diagram \eqref{complex-b+2k+2-diagram-1} commutes. Let $\Gamma'$ be the MOY graph given in Figure \ref{MOY-decomp-II-fig}, under the identification $C(\Gamma') \simeq C(\Gamma_{10}) \simeq C(\Gamma_{01})$, $C(\Gamma_{10}) \xrightarrow{\chi_l^1} C(\Gamma_{00})$ becomes $C(\Gamma') \xrightarrow{\chi^1} C(\Gamma_{00})$ and $C(\Gamma_{00}) \xrightarrow{\chi_r^0} C(\Gamma_{01})$ becomes $C(\Gamma_{00}) \xrightarrow{\chi^0} C(\Gamma')$. Recall that $\chi^0 \circ \chi^1 = (x_1-x_4) \cdot \id_{C(\Gamma')}$. Putting these together, we get
\begin{equation}\label{homotopy-b+2k+2-2}
\chi_l^1\circ\chi_r^0 \simeq \chi_r^0\circ\chi_l^1 \simeq (x_1-x_4) \cdot \id_{C(\Gamma')}.
\end{equation}

Using homotopy \eqref{homotopy-b+2k+2-1}, \eqref{homotopy-b+2k+2-1-extra} and \eqref{homotopy-b+2k+2-2}, it is easy to see that chain complex \eqref{complex-b+2k+2-reduced-1} is isomorphic to $\mathscr{B}_{k+1}$ and, therefore, $C(\mathsf{b}^{2k+2}) \simeq \mathscr{B}_{k+1}$. This completes the induction.
\end{proof}

\section{The Rasmussen Invariants of $2$-Cables}\label{sec-ras}

In this section, we prove Theorems \ref{thm-linearity-general} and \ref{thm-linearity-2cable}. We start by reviewing the structure of the equivariant Khovanov-Rozansky homology $H_{x^{N+1}-ax}$.

\subsection{Structure of $H_{x^{N+1}-ax}$} In \cite{Lobb-gornik}, Lobb determined the filtration on $H_{x^{N+1}-x}$. Using this, he described in \cite{Lobb-2-twists} the structure of the free part of $H_{x^{N+1}-ax}$, which we will use in our proofs of Theorems \ref{thm-linearity-general} and \ref{thm-linearity-2cable}. Since Lobb's description in \cite{Lobb-2-twists} contains only very brief explanations, we include a slightly more detailed proof here for the convenience of the reader.

A graded and free module does not necessarily admit a homogeneous basis. We call a module graded-free if it is graded and free and admits a homogeneous basis.

\begin{lemma}\label{lemma-graph-homology-free}
Let $\Gamma$ be an embedded MOY graph in the plane. Here, ``embedded" means $\Gamma$ contains no crossings or virtual crossings. Then $H_{x^{N+1}-ax}(\Gamma)$ is a finitely generated graded-free $\C[a]$-module.
\end{lemma}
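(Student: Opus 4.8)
The plan is to reduce the computation of $H_{x^{N+1}-ax}(\Gamma)$ to that of the empty diagram. Recall that $C_{x^{N+1}-ax}(\emptyset)\cong\C[a]$, concentrated in $\zed_2$-degree $0$ with zero differential, so that $H_{x^{N+1}-ax}(\emptyset)=\C[a]$ is certainly a finitely generated graded-free $\C[a]$-module; as a slightly less trivial base computation, for a single circle one gets $H_{x^{N+1}-ax}(\bigcirc)\cong\C[a][x]/\big((N+1)x^{N}-a\big)$, which is free over $\C[a]$ on the homogeneous basis $1,x,\dots,x^{N-1}$, i.e.\ $C_{x^{N+1}-ax}(\bigcirc)\simeq\bigoplus_{j=0}^{N-1}C_{x^{N+1}-ax}(\emptyset)\{q^{2j-(N-1)}\}$.

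The main step is to show that for an \emph{arbitrary} closed embedded MOY graph $\Gamma$ the matrix factorization $C_{x^{N+1}-ax}(\Gamma)$ is homotopy equivalent to a \emph{finite} direct sum $\bigoplus_{i}C_{x^{N+1}-ax}(\emptyset)\{q^{s_i}\}$. I would obtain this by repeatedly applying the MOY relations (circle removal, bigon removal, and the second MOY decomposition of Proposition \ref{MOY-decomp-II}), each of which replaces $C_{x^{N+1}-ax}$ of a graph by a finite direct sum of grading-shifted copies of $C_{x^{N+1}-ax}$ of strictly simpler graphs, until $\Gamma$ is reduced to a disjoint union of empty diagrams; this is precisely the inductive computation underlying the statement that the MOY calculus evaluates a closed embedded graph. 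Granting it, $H_{x^{N+1}-ax}(\Gamma)\cong\bigoplus_{i}\C[a]\{q^{s_i}\}$ is simultaneously finitely generated and graded-free, and the lemma follows.

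The one point that requires care, and which I expect to be the real content, is that the MOY relations remain valid for the potential $x^{N+1}-ax$. This is essentially Krasner's equivariant theory \cite{Krasner} (see also \cite{Wu-color-equi}): the homomorphisms realizing the relations -- the maps $\chi^{0},\chi^{1},\xi^{0},\xi^{1},J,P$ recalled in Section \ref{sec-simplify} -- are defined over $\C[a]$ and satisfy the same composition identities \eqref{eq-chi-comp}--\eqref{comp-J-P-1}, so the Gaussian-elimination arguments that establish the undeformed MOY relations go through verbatim over $\C[a]$.

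If one wishes to avoid invoking the equivariant MOY calculus wholesale, the finite-generation half can be obtained more directly. The complex $C_{x^{N+1}-ax}(\Gamma)$ is a finite $\zed_2$-graded complex of free $\C[a]$-modules, and for each edge variable $x_i$ the endomorphism $\mathtt{m}\big((N+1)x_i^{N}-a\big)$ is null-homotopic on it (because $(N+1)x_i^{N}-a$ agrees, modulo $x_i-x_j$, with an entry of the Koszul factorization attached to the edge, and multiplication by either entry of a Koszul factorization is null-homotopic); hence $H_{x^{N+1}-ax}(\Gamma)$ is a module over the finite free $\C[a]$-algebra $\C[a][x_1,\dots,x_m]/\big((N+1)x_1^{N}-a,\dots,(N+1)x_m^{N}-a\big)$ and, being finitely generated over the Noetherian ring $\C[a][x_1,\dots,x_m]$, is finitely generated over $\C[a]$. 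Since $\C[a]$ is a graded principal ideal domain, it then remains to rule out $a$-torsion; a universal-coefficient argument (using $C_{x^{N+1}-ax}(\Gamma)\otimes_{\C[a]}\C=C_{x^{N+1}}(\Gamma)$) shows that $a$-torsion-freeness is equivalent to the equality $\operatorname{rank}_{\C[a]}H_{x^{N+1}-ax}(\Gamma)=\dim_{\C}H_{x^{N+1}}(\Gamma)$. Both sides equal the value at $q=1$ of the MOY bracket of $\Gamma$ -- for the undeformed side by Khovanov--Rozansky \cite{KR1}, and for the equivariant side by computing the rank after inverting $a$, where the rescaling $x\mapsto a^{1/N}y$ turns $x^{N+1}-ax$ into a unit multiple of Gornik's potential $y^{N+1}-y$ and Gornik's dimension count \cite{Gornik} applies. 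This ``no loss of rank under deformation'' is the crux; everything else is formal.
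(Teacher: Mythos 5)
Your argument is correct in substance, but it is genuinely different from the one in the paper. The paper does not invoke the equivariant MOY calculus at all: it filters $C_{x^{N+1}-ax}(\Gamma)$ by the polynomial degree in the edge variables $x_1,\dots,x_m$ (ignoring $a$), observes that the associated graded complex is just $C_{x^{N+1}}(\Gamma)\otimes_\C\C[a]$, and imports from \cite{Wu7} the facts that the homology is concentrated in one $\zed_2$-degree and that the associated graded of the homology is $H_{x^{N+1}}(\Gamma)\otimes_\C\C[a]$; since the latter is free and the filtration is bounded, an induction on the filtration level gives freeness, and gradedness plus boundedness below gives graded-freeness via \cite[Lemma 3.3]{Wu-color}. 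This is essentially a degenerate-spectral-sequence argument saying ``the $a$-deformation cannot create torsion because the $a=0$ page is already free,'' which is close in spirit to your second, rank-counting argument but avoids having to know either dimension count: you never need Gornik's state basis or the MOY evaluation, only the formal comparison between the deformed and undeformed complexes. Your first route (full reduction to $\bigoplus_i C(\emptyset)\{q^{s_i}\}$) also works, but note two things: the three relations you list (circle, digon, Proposition \ref{MOY-decomp-II}) do \emph{not} suffice to reduce an arbitrary closed $1,2$-colored MOY graph --- you also need the square-switch decompositions (decompositions IV--V of \cite{KR1}, established equivariantly in \cite{Krasner,Wu-color-equi}) --- so you are really invoking the full equivariant MOY calculus, a much heavier input than the lemma requires. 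Your second route is sound as well, with the small but necessary observation (which you should make explicit) that a graded torsion $\C[a]$-module with $\deg a=2N\neq 0$ is automatically $a$-power torsion, so that the specialization $a=1$ computes the rank exactly and the comparison $\dim_\C H_{x^{N+1}}(\Gamma)=\dim_\C H_{x^{N+1}-x}(\Gamma)$ (both equal to the MOY evaluation at $q=1$, by \cite{KR1} and \cite{Gornik} respectively) forces the torsion at $a=0$ to vanish.
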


\begin{proof}
For $\ve\in \zed_2$, denote by $H_{x^{N+1}-ax}^\ve(\Gamma)$ the component of $H_{x^{N+1}-ax}(\Gamma)$ of $\zed_2$-grading $\ve$. If we replace all wide edges in $\Gamma$ by a pair of parallel arcs, that is, replacing $\Gamma_1$ in Figure \ref{resolutions-fig} by $\Gamma_0$ in Figure \ref{resolutions-fig}, then we get a finite collection of embedded circles in the plane. Let $\ve$ be the element of $\zed_2$ that is congruent to the cardinality of this collection modulo $2$. 

Let $x_1,\dots,x_m$ be the variables associated to all marked points of $\Gamma$. Then the $q$-grading of $C_{x^{N+1}-ax}(\Gamma)$ comes from the grading of $\C[a,x_1,\dots,x_m]$ given by $\deg a = 2N$ and $\deg x_i = 2$ for $i=1,\dots,m$. The grading of $\C[x_1,\dots,x_m]$ induces a filtration $\fil_x$ on $C_{x^{N+1}-ax}(\Gamma)$. Using this filtration, we can repeat the argument in \cite[Proof of Proposition 2.19]{Wu7} and get that
\begin{eqnarray}
\label{eq-graph-homology-free-1} H_{x^{N+1}-ax}^{\ve+1}(\Gamma) & = & 0, \\
\label{eq-graph-homology-free-2} \fil^k_x H_{x^{N+1}-ax}^\ve(\Gamma) / \fil^{k-1}_x H_{x^{N+1}-ax}^\ve(\Gamma) & \cong & H_{x^{N+1}}^{\ve,k}(\Gamma) \otimes_\C \C[a],
\end{eqnarray}
where $H_{x^{N+1}}^{\ve,k}(\Gamma)$ is the component of $H_{x^{N+1}}(\Gamma)$ with $\zed_2$-grading $\ve$ and $q$-grading $k$. Note that the right-hand side of \eqref{eq-graph-homology-free-2} is a finitely generated free $\C[a]$-module and that the filtration $\fil_x$ of $H_{x^{N+1}-ax}^\ve(\Gamma)$ is bounded. So it is easy to prove by an induction on $k$ that $H_{x^{N+1}-ax}^\ve(\Gamma)$ is a finitely generated free $\C[a]$-module. Recall that $H_{x^{N+1}-ax}^\ve(\Gamma)$ is a graded $\C[a]$-module and its grading is bounded below. So $H_{x^{N+1}-ax}^\ve(\Gamma)$ is a finitely generated graded-free $\C[a]$-module by \cite[Lemma 3.3]{Wu-color}.
\end{proof}

\begin{lemma}\label{lemma-free-chain-complex}
Assume that $C^\ast$ is a bounded chain complex of finitely generated graded-free $\C[a]$-modules and its differential map preserves the grading. Denote by $\hat{C}^\ast$ the filtered chain complex $\hat{C}^\ast = C^\ast/(a-1)C^\ast$ of finite dimensional $\C$-linear spaces. Denote by $H^\ast$ and $\hat{H}^\ast$ the homologies of $C^\ast$ and $\hat{C}^\ast$. Then
\[
H^n \cong \mathscr{F}^n \oplus \mathscr{T}^n,
\]
where $\mathscr{F}^n$ is a graded-free $\C[a]$-module whose graded rank is equal to the filtered dimension of $\hat{H}^n$ and $\mathscr{T}^n$ is either $0$ or a finitely generated graded torsion $\C[a]$-module.
\end{lemma}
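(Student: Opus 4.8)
The plan is to reduce the statement to the structure theory of finitely generated graded modules and bounded complexes over the graded principal ideal domain $\C[a]$, $\deg a=2N$, using that the only homogeneous elements of $\C[a]$ are the monomials $c\,a^{k}$, so that its only graded ideals are $(0)$ and the $(a^{k})$.

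First I would put $C^{\ast}$ into a chain-level normal form. Since its differential preserves the grading and each $C^{n}$ is finitely generated graded-free, a homogeneous Smith normal form — carried out inductively from the top homological degree, where $d\circ d=0$ forces the image of the incoming differential to lie in the sub-basis-spanned free summand that is the kernel of the outgoing differential — shows that $C^{\ast}$ is isomorphic, as a complex of graded $\C[a]$-modules, to a finite direct sum of elementary complexes of three kinds: (i) $\C[a]\{q^{s}\}$ in a single homological degree; (ii) $\C[a]\{q^{s}\}\xrightarrow{\ \mathrm{id}\ }\C[a]\{q^{s}\}$ in two consecutive degrees; (iii) $\C[a]\{q^{s+2Nk}\}\xrightarrow{\ a^{k}\ }\C[a]\{q^{s}\}$ in two consecutive degrees, with $k\ge 1$. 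Computing homology summand by summand, type (i) contributes $\C[a]\{q^{s}\}$ in its degree, type (ii) contributes $0$, and type (iii) contributes $\C[a]/(a^{k})\{q^{s}\}$ in its upper degree. Hence $H^{n}\cong\mathscr{F}^{n}\oplus\mathscr{T}^{n}$, where $\mathscr{F}^{n}$ is the graded-free module with graded rank $\sum_{\alpha}q^{s_{\alpha}}$ over the type-(i) summands sitting in degree $n$, and $\mathscr{T}^{n}$ is a finite direct sum of modules $\C[a]/(a^{k})\{q^{s}\}$, hence either $0$ or a finitely generated graded torsion module. (The splitting $H^{n}\cong\mathscr{F}^{n}\oplus\mathscr{T}^{n}$ alone also follows from the structure theorem for finitely generated graded $\C[a]$-modules, e.g.\ \cite[Lemma 3.3]{Wu-color}; the point of the chain-level decomposition is that it also controls the $q$-gradings below.)

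Next I would read off the filtered space $\hat{H}^{n}$. The functor $C^{\ast}\mapsto C^{\ast}/(a-1)C^{\ast}$ commutes with finite direct sums, and since $a^{k}\equiv 1\pmod{a-1}$ it sends each elementary complex of type (ii) or (iii) to $\C\xrightarrow{\ \mathrm{id}\ }\C$, which is acyclic, and each type-(i) summand $\C[a]\{q^{s}\}$ in degree $n$ to a one-dimensional space in degree $n$ whose induced filtration — the image filtration coming from the grading of $C^{\ast}$ — jumps exactly at $s$: indeed $\fil^{k}$ of that quotient is the image of $\bigoplus_{j\le k}C^{n}_{j}$, which is $0$ for $k<s$ and everything for $k\ge s$, since the degree-$(s+2Nm)$ element $a^{m}g$ and the generator $g$ have the same nonzero image. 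Therefore $\hat{H}^{n}$ is, as a filtered vector space, a direct sum of one-dimensional pieces with filtration jumps at the same numbers $s_{\alpha}$ indexing the type-(i) summands in degree $n$, so its filtered dimension equals $\sum_{\alpha}q^{s_{\alpha}}$, which is precisely the graded rank of $\mathscr{F}^{n}$. This completes the proof.

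The point to watch, and the reason for going through the chain-level decomposition, is that the conclusion concerns the \emph{filtered} dimension of $\hat{H}^{n}$, not merely $\dim_{\C}\hat{H}^{n}$. A quicker route — applying the long exact sequence of $0\to C^{\ast}\xrightarrow{a-1}C^{\ast}\to\hat{C}^{\ast}\to 0$, noting that $a-1$ is injective on $H^{\ast}$ because it is invertible on each torsion summand $\C[a]/(a^{k})$ where $a$ is nilpotent — gives $\hat{H}^{n}\cong H^{n}/(a-1)H^{n}$ and hence $\dim_{\C}\hat{H}^{n}=\mathrm{rank}_{\C[a]}\,\mathscr{F}^{n}$, but it does not by itself control the filtration, since a filtration-preserving linear bijection need not be a filtered isomorphism. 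The remaining details are routine: that the homogeneous Smith normal form produces exactly the indicated shifts, and that "$\fil$" on $\hat{C}^{\ast}$ is the quotient of the grading filtration used throughout.
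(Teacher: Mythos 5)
Your proof is correct and follows essentially the same route as the paper: decompose $C^\ast$ into a direct sum of elementary graded complexes over $\C[a]$ (your types (ii) and (iii) are just the paper's single type $\C[a]\{q^m\}\xrightarrow{a^k}\C[a]\{q^{m-k}\}$ split into the cases $k=0$ and $k\geq 1$) and read off $H^\ast$ and the filtered $\hat{H}^\ast$ summand by summand. The additional care you take with the homogeneous Smith normal form and with locating the filtration jump of a type-(i) summand at $s$ only fills in details the paper leaves implicit.
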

\begin{proof}
(Following \cite{Lobb-2-twists}.) Choose a homogeneous basis for each $C^n$ and express the differential map of $C^\ast$ as matrices. Then each entry of these matrices is of the form $c\cdot a^m$, where $c \in \C$. Using this, it is not hard to modify the homogeneous basis to decompose $C^\ast$ into a finite direct sum of graded chain complexes of the types
\begin{enumerate}
	\item $0\rightarrow \C[a]\{q^m\} \rightarrow 0$,
	\item $0\rightarrow \C[a]\{q^m\} \xrightarrow{a^k} \C[a]\{q^{m-k}\} \rightarrow 0$.
\end{enumerate}
A type (1) chain complex contributes a rank one free component of graded rank $q^m$ to $H^\ast$ and a $1$-dimensional subspace of filtered dimension $q^m$ to $\hat{H}^\ast$. A type (2) chain complex contributes a torsion component to $H^\ast$ and nothing to $\hat{H}^\ast$. The lemma follows from these observations.
\end{proof}

The $\zed_2$-grading of the Khovanov-Rozansky homology of a knot is trivial. We do not keep track of this grading in the rest of this section.

\begin{corollary}\cite{Lobb-2-twists}\label{cor-H-equi-structure}
For any knot $K$, denote by $H_{x^{N+1}-ax}^m(K)$ the component of $H_{x^{N+1}-ax}(K)$ of homological grading $m$. Then
\[
H_{x^{N+1}-ax}^m(K) \cong 
\begin{cases}
\mathscr{T}^0 \oplus \bigoplus_{l=1}^{N} \C[a]\{q^{N+1-2l+s_N(K)}\} & \text{if } m=0, \\
\mathscr{T}^m & \text{if } m \neq 0,
\end{cases}
\]
where $s_N(K)$ is the $\mathfrak{sl}(N)$ Rasmussen invariant of $K$ and, for each $m\in \zed$, $\mathscr{T}^m$ is either $0$ or a finitely generated graded torsion $\C[a]$-module.
\end{corollary}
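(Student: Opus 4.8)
The plan is to feed the equivariant chain complex of $K$ into Lemma~\ref{lemma-free-chain-complex}, with Gornik's deformed homology $H_{x^{N+1}-x}(K)$ playing the role of the $\hat H^\ast$ of that lemma. Fix a diagram $D$ of $K$ and consider its cube of resolutions. First I would replace the matrix-factorization chain complex $C_{x^{N+1}-ax}(D)$ by an ordinary chain complex $C^\ast$ of $\C[a]$-modules: in homological degree $n$, let $C^n$ be the direct sum, over the resolutions $\Gamma$ occurring in degree $n$, of the matrix-factorization homologies $H_{x^{N+1}-ax}(\Gamma)$ (with the customary $q$-grading shifts), and let the differential be induced by the cube maps $\chi^0,\chi^1$. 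By Lemma~\ref{lemma-graph-homology-free} each $H_{x^{N+1}-ax}(\Gamma)$ is a finitely generated graded-free $\C[a]$-module, and by \eqref{eq-graph-homology-free-1} it is concentrated in a single $\zed_2$-degree, so that $H_{x^{N+1}-ax}(K)$ is computed directly as $H^\ast(C^\ast)$; this reduction is the standard one in the subject (cf. \cite{Wu7}, \cite{Lobb-2-twists}). After the usual homological-degree-dependent shifts the induced differential is homogeneous of $q$-degree $0$. Hence $C^\ast$ is a bounded chain complex of finitely generated graded-free $\C[a]$-modules with grading-preserving differential, which is exactly the input hypothesis of Lemma~\ref{lemma-free-chain-complex}.

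Next I would identify $\hat C^\ast := C^\ast/(a-1)C^\ast$. Setting $a=1$ specializes the potential $x^{N+1}-ax$ to $x^{N+1}-x$, so $\hat C^\ast$ is the filtered chain complex computing the deformed homology, i.e. $\hat H^\ast = H_{x^{N+1}-x}(K)$ with its $q$-filtration. By Gornik \cite{Gornik}, for a knot $\hat H^\ast$ is supported in homological degree $0$ and is $N$-dimensional over $\C$; and by the definition of $s_N$ in \cite{Wu7} together with Lobb's determination of the filtration on $H_{x^{N+1}-x}$ in \cite{Lobb-gornik}, the filtered dimension of $\hat H^0$ equals $\sum_{l=1}^{N} q^{\,N+1-2l+s_N(K)}$, while $\hat H^m = 0$ for $m\neq 0$. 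Plugging this into Lemma~\ref{lemma-free-chain-complex} gives $H_{x^{N+1}-ax}^m(K)\cong \mathscr{F}^m\oplus\mathscr{T}^m$ with $\mathscr{F}^m$ graded-free of graded rank equal to the filtered dimension of $\hat H^m$: for $m=0$ this is $\bigoplus_{l=1}^N \C[a]\{q^{N+1-2l+s_N(K)}\}$, and for $m\neq 0$ it is $0$, so $H^m_{x^{N+1}-ax}(K)\cong\mathscr{T}^m$. Renaming the torsion pieces as in the statement finishes the proof.

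The step I expect to carry the real content, as opposed to bookkeeping, is the appeal to Lobb's result that the $q$-filtration on $H_{x^{N+1}-x}(K)$ is evenly spread in the pattern $q^{N+1-2l+s_N(K)}$, $l=1,\dots,N$: this is what makes $s_N(K)$ appear and pins down the graded rank of the free part of $H^0$. The other delicate point is the reduction in the first paragraph — one must be sure that the matrix-factorization homologies of the resolutions genuinely assemble into a complex of graded-free $\C[a]$-modules with grading-preserving differential whose homology is $H_{x^{N+1}-ax}(K)$; here the concentration in a single $\zed_2$-degree coming from \eqref{eq-graph-homology-free-1} is precisely what removes any spectral-sequence subtlety.
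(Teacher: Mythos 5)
Your proof is correct and follows essentially the same route as the paper: the paper's own (very terse) proof likewise combines Gornik's vanishing theorem and Lobb's computation of the filtered dimension of $H_{x^{N+1}-x}^0(K)$ with Lemmas \ref{lemma-graph-homology-free} and \ref{lemma-free-chain-complex}. Your write-up merely makes explicit the implicit step of assembling the graph homologies of the resolutions into a bounded complex of graded-free $\C[a]$-modules (using the $\zed_2$-degree concentration from \eqref{eq-graph-homology-free-1}) so that Lemma \ref{lemma-free-chain-complex} applies, which is exactly what the paper intends.
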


\begin{proof}
(Following \cite{Lobb-2-twists}.) In \cite[Theorem 2]{Gornik}, Gornik proved that that $H_{x^{N+1}-x}^m(K)=0$ if $m\neq 0$. In \cite{Lobb-gornik}, Lobb proved that the filtered dimension of $H_{x^{N+1}-x}^0(K)$ is $\sum_{l=1}^{N} q^{N+1-2l+s_N(K)}$. By Lemmas \ref{lemma-graph-homology-free} and \ref{lemma-free-chain-complex}, these imply the corollary.
\end{proof}

\subsection{$2$-braids in a knot diagram} We prove Theorem \ref{thm-linearity-general} in this subsection. We first recall Gornik's computation of $H_{x^{N+1}-x}$ in \cite{Gornik}. 

\begin{figure}[ht]
\[
\xymatrix{
\text{crossing:} & \setlength{\unitlength}{1pt}
\begin{picture}(40,20)(-10,-20)

\put(-10,0){\small{$e_3$}}

\put(-10,-20){\small{$e_4$}}

\qbezier(0,0)(5,0)(10,-10)
\qbezier(10,-10)(15,-20)(20,-20)

\qbezier(0,-20)(5,-20),(9,-12)
\qbezier(11,-8)(15,0),(20,0)

\put(22,-20){\vector(1,0){0}}

\put(22,0){\vector(1,0){0}}

\put(25,0){\small{$e_1$}}

\put(25,-20){\small{$e_2$}}

\end{picture} & \text{or} & \setlength{\unitlength}{1pt}
\begin{picture}(40,20)(-10,0)

\put(-10,20){\small{$e_3$}}

\put(-10,0){\small{$e_4$}}

\qbezier(0,0)(5,0)(10,10)
\qbezier(10,10)(15,20)(20,20)

\qbezier(0,20)(5,20),(9,12)
\qbezier(11,8)(15,0),(20,0)

\put(22,20){\vector(1,0){0}}

\put(22,0){\vector(1,0){0}}

\put(25,20){\small{$e_1$}}

\put(25,0){\small{$e_2$}}

\end{picture} \\
\text{wide edge:} & \setlength{\unitlength}{1pt}
\begin{picture}(50,20)(-10,0)

\put(-10,20){\small{$e_3$}}

\put(-10,0){\small{$e_4$}}

\put(0,0){\vector(1,1){10}}

\put(0,20){\vector(1,-1){10}}

\put(20,10){\vector(1,1){10}}

\put(20,10){\vector(1,-1){10}}

\put(35,20){\small{$e_1$}}

\put(35,0){\small{$e_2$}}

\linethickness{3pt}

\put(10,10){\line(1,0){10}}

\end{picture}
}
\]

\caption{Labelling around a crossing or a wide edge}\label{gornik-state-fig}

\end{figure}
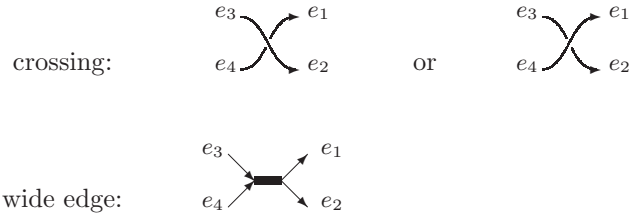

We view a knotted MOY graph as an oriented $4$-valent graph embedded in the plane with two types of vertices: crossings and wide edges. We name the edge around a crossing or a wide edge as in Figure \ref{gornik-state-fig}. Let $\Sigma_N= \{e^{\frac{2k\pi i}{N}}~|~ k=0,\dots,N-1\}$. A state of the knotted MOY graph $\Gamma$ is a function $\varphi$ from the set of edges of $\Gamma$ to $\Sigma_N$ such that 
\begin{itemize}
	\item at each crossing of $\Gamma$, $\varphi(e_1) = \varphi(e_4)$ and $\varphi(e_2) = \varphi(e_3)$,
	\item at each wide edge of $\Gamma$, $\varphi(e_1) \neq \varphi(e_2)$ and $\{\varphi(e_1),~ \varphi(e_2)\} = \{\varphi(e_3),~ \varphi(e_4)\}$.
\end{itemize}
For each state $\varphi$ of $\Gamma$, there is a non-zero element $\mathbf{a}_\varphi$ of $H_{x^{N+1}-x}(\Gamma)$. Denote by $\mathcal{S}_N (\Gamma)$ the set of all states of $\Gamma$. Then the set $\{\mathbf{a}_\varphi~|~ \varphi \in \mathcal{S}_N (\Gamma)\}$ is a $\C$-linear basis for $H_{x^{N+1}-x}(\Gamma)$. Moreover, if $x$ is a variable assigned to a marked point on an edge $e$ of $\Gamma$, then 
\begin{equation}\label{eq-Gornik-basis-action}
x\cdot\mathbf{a}_\varphi = \varphi(e)\cdot\mathbf{a}_\varphi.
\end{equation}

Now we are ready to prove Theorem \ref{thm-linearity-general}.

\begin{proof}[Proof of Theorem \ref{thm-linearity-general}]
Let $D_0$, $D_k$ and $\Gamma$ be as in Subsection \ref{subsec-exact} and assume that $D_0$ is a knot diagram. For $i=1,2,3,4$, denote by $e_i$ the edge in $\Gamma$ marked by $x_i$. In order for $D_0$ to be a knot diagram, we know that $e_1$ must be connected to $e_4$ via the part of $\Gamma$ not shown in Figure \ref{D-k-Gamma-fig}. This means that, for any state $\varphi$ of $\Gamma$, $\varphi(e_1)=\varphi(e_4)\neq \varphi(e_3)$. Thus, by equation \eqref{eq-Gornik-basis-action}, $(x_1-x_3)\cdot\mathbf{a}_\varphi = (\varphi(e_1)-\varphi(e_3)) \cdot\mathbf{a}_\varphi \neq 0$. This implies that $H_{x^{N+1}-x}(\Gamma) \xrightarrow{x_1-x_3} H_{x^{N+1}-x}(\Gamma)\{q^{-2}\}$ is an isomorphism. Then long exact sequence \eqref{minor-long-exact-sequence} tells us that $\mathscr{H}_{x^{N+1}-x}(\Gamma)=0$. By Lemma \ref{lemma-graph-homology-free}, $\mathscr{H}_{x^{N+1}-ax}(\Gamma)$ is computed by a chain complex of finitely generated graded-free $\C[a]$-modules. Then, from Lemma \ref{lemma-free-chain-complex}, we know that $\mathscr{H}_{x^{N+1}-ax}(\Gamma)$ is purely torsion.
{\tiny
\[
\cdots \rightarrow \mathscr{H}_{x^{N+1}-ax}^{2k-1}(\Gamma)\{q^{2k(N+1)-1}\} \rightarrow H_{x^{N+1}-ax}^{0}(D_{k-1})\{q^{2(N-1)}\} \rightarrow H_{x^{N+1}-ax}^{0}(D_{k}) \rightarrow \mathscr{H}_{x^{N+1}-ax}^{2k}(\Gamma)\{q^{2k(N+1)-1}\} \rightarrow \cdots
\]}
Now consider the above segment of long exact sequence \eqref{long-exact-sequence} Assume $k\geq \frac{c_-+2}{2}$. Then, by the construction of the Khovanov-Rozansky chain complex, we know that $C^l(\Gamma)=0$ for $l \geq 2k-1$. Using long exact sequence \eqref{minor-long-exact-sequence}, this implies that $\mathscr{H}^{2k}(\Gamma)=0$. So, in the above exact sequence, the homomorphism
\begin{equation}\label{eq-hom-inj-surj}
H_{x^{N+1}-ax}^{0}(D_{k-1})\{q^{2(N-1)}\} \rightarrow H_{x^{N+1}-ax}^{0}(D_{k})
\end{equation}
is surjective. Recall that $\mathscr{H}_{x^{N+1}-ax}(\Gamma)$ is purely torsion. So the restriction of homomorphism \eqref{eq-hom-inj-surj} onto the free part of $H_{x^{N+1}-ax}^{0}(D_{k-1})\{q^{2(N-1)}\}$ is injective. Thus, homomorphism \eqref{eq-hom-inj-surj} induces an isomorphism of the free parts of $H_{x^{N+1}-ax}^{0}(D_{k-1})\{q^{2(N-1)}\}$ and $H_{x^{N+1}-ax}^{0}(D_{k})$. By Corollary \ref{cor-H-equi-structure}, this implies that
\begin{equation}\label{eq-linearity-general+}
s_N(D_k) = s_N(D_{k-1}) + 2(N-1) \text{ if } k\geq \frac{c_- +2}{2}.
\end{equation}
Since $s_N$ is a concordance invariant, we know that $s_N(\overline{K})=-s_N(K)$ for any knot $K$, where $\overline{K}$ is the mirror image of $K$. Using this and equation \eqref{eq-linearity-general+}, it is easy to show that
\begin{equation}\label{eq-linearity-general-}
s_N(D_k) = s_N(D_{k-1}) + 2(N-1) \text{ if } k \leq \frac{-c_+ -2}{2}.
\end{equation}
\end{proof}

\subsection{$(2,2k+1)$ cables} Next, we improve the proof of Theorem \ref{thm-linearity-general} for $(2,2k+1)$ cables and prove Theorem \ref{thm-linearity-2cable}. For this purpose, we briefly review some basic properties of the colored $\mathfrak{sl}(N)$ homology in \cite{Wu-color,Wu-color-equi,Wu-color-ras}. 

\begin{figure}[ht]
\[
\xymatrix{
\setlength{\unitlength}{1pt}
\begin{picture}(70,40)(10,-10)

\qbezier(50,10)(50,20)(62,20)

\qbezier(50,10)(50,0)(62,0)

\put(68,20){\line(1,0){12}}

\put(68,0){\line(1,0){12}}

\put(65,-10){\line(0,1){40}}

\linethickness{3pt}

\put(10,10){\line(1,0){40}}

\end{picture}   & \ar@{<~>}[r] && \setlength{\unitlength}{1pt}
\begin{picture}(70,40)(10,-10)

\qbezier(50,10)(50,20)(62,20)

\qbezier(50,10)(50,0)(62,0)

\put(62,20){\line(1,0){18}}

\put(62,0){\line(1,0){18}}

\put(30,-10){\line(0,1){40}}

\linethickness{3pt}

\put(10,10){\line(1,0){16}}

\put(34,10){\line(1,0){16}}

\end{picture}  \\
\setlength{\unitlength}{1pt}
\begin{picture}(70,40)(10,-10)

\qbezier(50,10)(50,20)(62,20)

\qbezier(50,10)(50,0)(62,0)

\put(68,20){\line(1,0){12}}

\put(68,0){\line(1,0){12}}

\linethickness{3pt}

\put(65,-10){\line(0,1){40}}

\put(10,10){\line(1,0){40}}

\end{picture} & \ar@{<~>}[r] &&  \setlength{\unitlength}{1pt}
\begin{picture}(70,40)(10,-10)

\qbezier(50,10)(50,20)(62,20)

\qbezier(50,10)(50,0)(62,0)

\put(62,20){\line(1,0){18}}

\put(62,0){\line(1,0){18}}

\linethickness{3pt}

\put(30,-10){\line(0,1){40}}

\put(10,10){\line(1,0){16}}

\put(34,10){\line(1,0){16}}

\end{picture} \\
\setlength{\unitlength}{1pt}
\begin{picture}(70,20)(90,-20)

\qbezier(130,-10)(135,-20)(140,-20)

\qbezier(130,-10)(135,0),(140,0)

\qbezier(140,0)(145,0)(150,-10)
\qbezier(150,-10)(155,-20)(160,-20)

\qbezier(140,-20)(145,-20),(149,-12)
\qbezier(151,-8)(155,0),(160,0)

\linethickness{3pt}

\put(90,-10){\line(1,0){40}}

\end{picture} & \ar@{<~>}[r]  && \setlength{\unitlength}{1pt}
\begin{picture}(70,40)(10,-10)

\qbezier(50,10)(50,20)(62,20)

\qbezier(50,10)(50,0)(62,0)

\put(62,20){\line(1,0){18}}

\put(62,0){\line(1,0){18}}

\linethickness{3pt}

\put(10,10){\line(1,0){40}}

\end{picture}
}
\]

\caption{Additional invariant moves up to overall grading shifts of the colored $\mathfrak{sl}(N)$ homology}\label{fork-sliding-fig} 

\end{figure}
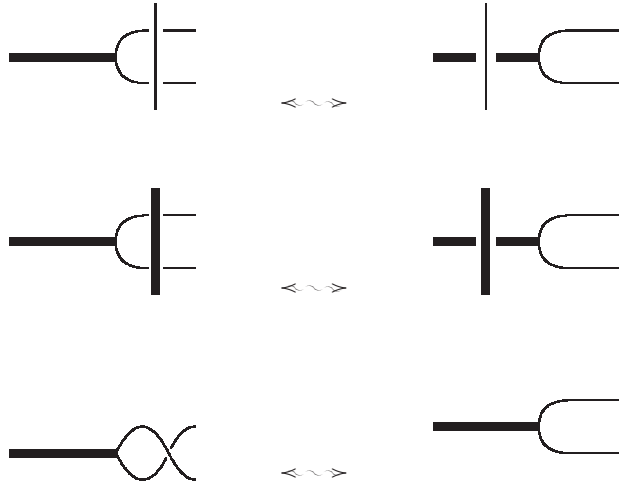

In the terminology of the colored $\mathfrak{sl}(N)$ homology, a wide edge in an MOY graph is an edge colored by $2$ and a thin edge is an edge colored by $1$. Each wide edge is equipped with the orientation pointing from the end with two incoming thin edges to the end with two outgoing thin edges. This way, at every vertex, the total color of inward edges and the total color of the outward edges are both $2$. In the colored $\mathfrak{sl}(N)$ homology, we allow crossings between a wide edge and a thin edge and crossings between two wide edges. The colored $\mathfrak{sl}(N)$ homology of a knotted MOY graph is invariant under all Reidemeister moves, including those involving wide edges. Up to overall grading shifts, it is also invariant under the moves in Figure \ref{fork-sliding-fig}, where, in the first two moves, we allow the vertical edge to be behind the ``fork" too. See \cite[Theorem 12.1 and Lemma 13.6]{Wu-color} and \cite[Theorem 7.1 and Lemma 8.4]{Wu-color-equi} for full statements.

We are now ready to prove Theorem \ref{thm-linearity-2cable}.

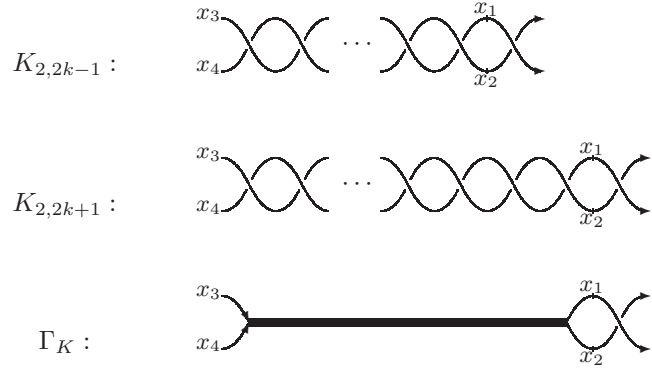
\begin{figure}[ht]
\[
\xymatrix{
K_{2,2k-1}: & \setlength{\unitlength}{1pt}
\begin{picture}(170,20)(-10,-20)

\put(-10,0){\small{$x_3$}}

\put(-10,-20){\small{$x_4$}}

\qbezier(0,0)(5,0)(10,-10)
\qbezier(10,-10)(15,-20)(20,-20)

\qbezier(0,-20)(5,-20),(9,-12)
\qbezier(11,-8)(15,0),(20,0)

\qbezier(20,0)(25,0)(30,-10)
\qbezier(30,-10)(35,-20)(40,-20)

\qbezier(20,-20)(25,-20),(29,-12)
\qbezier(31,-8)(35,0),(40,0)

\put(45,-12){$\cdots$}

\qbezier(60,0)(65,0)(70,-10)
\qbezier(70,-10)(75,-20)(80,-20)

\qbezier(60,-20)(65,-20),(69,-12)
\qbezier(71,-8)(75,0),(80,0)

\qbezier(80,0)(85,0)(90,-10)
\qbezier(90,-10)(95,-20)(100,-20)

\qbezier(80,-20)(85,-20),(89,-12)
\qbezier(91,-8)(95,0),(100,0)

\qbezier(100,0)(105,0)(110,-10)
\qbezier(110,-10)(115,-20)(120,-20)

\qbezier(100,-20)(105,-20),(109,-12)
\qbezier(111,-8)(115,0),(120,0)

\put(122,-20){\vector(1,0){0}}

\put(122,0){\vector(1,0){0}}

\put(95,2){\small{$x_1$}}

\put(95,-25){\small{$x_2$}}

\put(100,-1){\line(0,1){2}}

\put(100,-21){\line(0,1){2}}

\end{picture}  \\
K_{2,2k+1}: & \setlength{\unitlength}{1pt}
\begin{picture}(170,20)(-10,-20)

\put(-10,0){\small{$x_3$}}

\put(-10,-20){\small{$x_4$}}

\qbezier(0,0)(5,0)(10,-10)
\qbezier(10,-10)(15,-20)(20,-20)

\qbezier(0,-20)(5,-20),(9,-12)
\qbezier(11,-8)(15,0),(20,0)

\qbezier(20,0)(25,0)(30,-10)
\qbezier(30,-10)(35,-20)(40,-20)

\qbezier(20,-20)(25,-20),(29,-12)
\qbezier(31,-8)(35,0),(40,0)

\put(45,-12){$\cdots$}

\qbezier(60,0)(65,0)(70,-10)
\qbezier(70,-10)(75,-20)(80,-20)

\qbezier(60,-20)(65,-20),(69,-12)
\qbezier(71,-8)(75,0),(80,0)

\qbezier(80,0)(85,0)(90,-10)
\qbezier(90,-10)(95,-20)(100,-20)

\qbezier(80,-20)(85,-20),(89,-12)
\qbezier(91,-8)(95,0),(100,0)

\qbezier(100,0)(105,0)(110,-10)
\qbezier(110,-10)(115,-20)(120,-20)

\qbezier(100,-20)(105,-20),(109,-12)
\qbezier(111,-8)(115,0),(120,0)

\put(135,2){\small{$x_1$}}

\put(135,-25){\small{$x_2$}}

\put(140,-1){\line(0,1){2}}

\put(140,-21){\line(0,1){2}}

\qbezier(120,0)(125,0)(130,-10)
\qbezier(130,-10)(135,-20)(140,-20)

\qbezier(120,-20)(125,-20),(129,-12)
\qbezier(131,-8)(135,0),(140,0)

\qbezier(140,0)(145,0)(150,-10)
\qbezier(150,-10)(155,-20)(160,-20)

\qbezier(140,-20)(145,-20),(149,-12)
\qbezier(151,-8)(155,0),(160,0)

\put(162,-20){\vector(1,0){0}}

\put(162,0){\vector(1,0){0}}

\end{picture} \\
\Gamma_K: & \setlength{\unitlength}{1pt}
\begin{picture}(170,20)(-10,-20)

\put(-10,0){\small{$x_3$}}

\put(-10,-20){\small{$x_4$}}

\qbezier(0,0)(5,0)(10,-10)

\qbezier(0,-20)(5,-20),(10,-10)

\put(10,-10){\vector(1,-2){0}}

\put(10,-10){\vector(1,2){0}}

\put(135,2){\small{$x_1$}}

\put(135,-25){\small{$x_2$}}

\put(140,-1){\line(0,1){2}}

\put(140,-21){\line(0,1){2}}

\qbezier(130,-10)(135,-20)(140,-20)

\qbezier(130,-10)(135,0),(140,0)

\qbezier(140,0)(145,0)(150,-10)
\qbezier(150,-10)(155,-20)(160,-20)

\qbezier(140,-20)(145,-20),(149,-12)
\qbezier(151,-8)(155,0),(160,0)

\put(162,-20){\vector(1,0){0}}

\put(162,0){\vector(1,0){0}}

\linethickness{3pt}

\put(10,-10){\line(1,0){120}}

\end{picture}
}
\]

\caption{Local diagrams of $K_{2,2k-1}$, $K_{2,2k+1}$ and $\Gamma_K$}\label{cables-fig}

\end{figure}

\begin{proof}[Proof of Theorem \ref{thm-linearity-2cable}]
For simplicity, we fix a diagram of $K$ with writhe $0$. (The number of positive/negative crossings in this diagram is most likely different from $c_\pm$.) The blackboard framing of $K$ is the Seifert framing of $K$. Pushing $K$ slightly in the direction of the blackboard framing, we get a knot $K'$. Then $K \cup K'= K_{2,0}$. Choose a small arc in $K$ away from crossings. Its image in $K'$ is an arc parallel to it. Replacing this pair of parallel arcs in $K \cup K'$ by $\mathsf{b}^{2k\pm 1}$, we get $K_{2,2k\pm 1}$. Also, if we replace this pair of parallel arcs by a wide edge followed by a positive half twist, then we get a knotted MOY graph $\Gamma_K$. (See Figure \ref{cables-fig}.) Applying Theorem \ref{thm-exact-sequence} to the triple $(K_{2,2k-1}, K_{2,2k+1},\Gamma_K)$, we get a long exact sequence  
{\tiny
\[
\cdots \rightarrow \mathscr{H}_{x^{N+1}-ax}^{2k-1}(\Gamma_K)\{q^{2k(N+1)-1}\} \rightarrow H_{x^{N+1}-ax}^{0}(K_{2,2k-1})\{q^{2(N-1)}\} \rightarrow H_{x^{N+1}-ax}^{0}(K_{2,2k+1}) \rightarrow \mathscr{H}_{x^{N+1}-ax}^{2k}(\Gamma_K)\{q^{2k(N+1)-1}\} \rightarrow \cdots
\]}

By the argument used in the proof of Theorem \ref{thm-linearity-general}, we know that $\mathscr{H}_{x^{N+1}-ax}(\Gamma_K)$ is purely torsion. So the homomorphism 
\begin{equation}\label{eq-homomorphism-2cable}
H_{x^{N+1}-ax}^{0}(K_{2,2k-1})\{q^{2(N-1)}\} \rightarrow H_{x^{N+1}-ax}^{0}(K_{2,2k+1})
\end{equation}
in the above exact sequence is injective on the free part of $H_{x^{N+1}-ax}^{0}(K_{2,2k-1})\{q^{2(N-1)}\}$. 

Recall that the writhe of $K$ is $0$. Using the moves in Figure \ref{fork-sliding-fig} and Proposition \ref{MOY-decomp-II} and keeping track of the grading shifts in \cite[Definition 6.11, Theorem 7.1 and Lemma 8.4]{Wu-color-equi}, we get that  
\begin{equation}\label{eq-isomorphism-Gamma-K}
H_{x^{N+1}-ax}(\Gamma_K) \cong H_{x^{N+1}-ax}(K^{(2)})\{q^2\} \oplus H_{x^{N+1}-ax}(K^{(2)}),
\end{equation}
where $K^{(2)}$ is $K$ colored by $2$. In other words, $K^{(2)}$ is $K$ drawn with wide edges. Recall that $K$ has a diagram with $c_-$ negative crossings. By the normalization of the homological grading of  $H_{x^{N+1}-ax}(K^{(2)})$ given in \cite[Definition 6.11]{Wu-color-equi}, this implies that 
\begin{equation}\label{eq-K-2-vanish}
H_{x^{N+1}-ax}^l(K^{(2)})=0 \text{ if } l \geq 2c_-+1.
\end{equation}
Combining \eqref{eq-isomorphism-Gamma-K}, \eqref{eq-K-2-vanish} and the long exact sequence \eqref{minor-long-exact-sequence}, we get that
\begin{equation}\label{eq-Gamma-K-vanish}
\mathscr{H}_{x^{N+1}-ax}^{2k}(\Gamma_K) \cong 0 \text{ if } k \geq c_-+1.
\end{equation} 
This implies that, when $k \geq c_-+1$, the homomorphism \eqref{eq-homomorphism-2cable} is surjective. Thus, homomorphism \eqref{eq-homomorphism-2cable} induces an isomorphism of the free parts of $H_{x^{N+1}-ax}^{0}(K_{2,2k-1})\{q^{2(N-1)}\}$ and $H_{x^{N+1}-ax}^{0}(K_{2,2k+1})$. By Corollary \ref{cor-H-equi-structure}, we have
\begin{equation}\label{eq-linearity-2cable-N+}
s_N(K_{2,2k+1}) = s_N(K_{2,2k-1}) + 2(N-1) \text{ if } k\geq c_-+1.
\end{equation}

Now consider the special case $N=2$. By the normalization given in \cite[Definition 6.11]{Wu-color-equi} and \cite[Lemma 7.3]{Wu-color-ras}\footnote{Although \cite[Lemma 7.3]{Wu-color-ras} is stated only for the deformed version of the colored $\mathfrak{sl}(N)$ homology, it is easy to see that this lemma and its proof are true for all three versions of the colored $\mathfrak{sl}(N)$ homology.}, one can see that unknotting $K$ does not change $H_{x^{3}-ax}(K^{(2)})$. This implies that $H_{x^{3}-ax}(K^{(2)}) \cong H_{x^{3}-ax}(U^{(2)}) \cong \C[a]$, where $U$ is the unknot and the homological grading of $\C[a]$ is $0$. So 
\begin{equation}\label{eq-K-2-vanish-2}
H_{x^{3}-ax}^l(K^{(2)})=0 \text{ if } l \geq 1
\end{equation}
and 
\begin{equation}\label{eq-Gamma-K-vanish-2}
\mathscr{H}_{x^{3}-ax}^{2k}(\Gamma_K) \cong 0 \text{ if } k \geq 1.
\end{equation} 
Repeating the argument in the previous paragraph, we get that
\begin{equation}\label{eq-linearity-2cable-2+}
s_2(K_{2,2k+1}) = s_2(K_{2,2k-1}) + 2 \text{ if } k \geq 1.
\end{equation}

Using \eqref{eq-linearity-2cable-N+}, \eqref{eq-linearity-2cable-2+} and the fact $s_N(\overline{K})=-s_N(K)$, one easily deduces that
\begin{equation}\label{eq-linearity-2cable-N-}
s_N(K_{2,2k+1}) = s_N(K_{2,2k-1}) + 2(N-1) \text{ if } k \leq -c_+-1
\end{equation}
and 
\begin{equation}\label{eq-linearity-2cable-2-}
s_2(K_{2,2k+1}) = s_2(K_{2,2k-1}) + 2 \text{ if } k \leq -1.
\end{equation}
\end{proof}

\end{document}